\documentclass[article,onefignum,onetabnum]{siamart250211}
\usepackage{_settings}
\tikzstyle{process} = [rectangle, minimum width=5cm, text width=5cm, minimum height=1cm, text centered, draw=black, fill=blue!10]
\tikzstyle{startstop} = [rectangle, rounded corners, minimum width=5cm, text width=5cm, minimum height=1cm, text centered, draw=black, fill=gray!20]

\tikzstyle{arrow} = [thick,->,>=stealth]
\usepackage{changes}
\definechangesauthor[name=Victor, color=gray]{lmf}
\definechangesauthor[name=Houssem, color=blue]{pm}
\title{A neural operator framework for solving inverse scattering problems\thanks{Submitted to the editors DATE.}}

\headers{A neural operator framework for solving inverse scattering problems}{V. Chenu, H. Haddar, H. Montanelli}

\ifpdf
\hypersetup{
  pdftitle={tbd},
  pdfauthor={V. Chenu, H. Haddar, H. Montanelli}
}
\fi

\author{Victor Chenu\thanks{Inria, ENSTA Paris, UMA, Institut Polytechnique de Paris, 91120 Palaiseau, France.}
\and Houssem Haddar\footnotemark[2]
\and Hadrien Montanelli\footnotemark[2]}

\begin{document}

\maketitle

\begin{abstract}
We present a neural operator framework for solving inverse scattering problems.
A neural operator produces a preliminary indicator function for the scatterer, which, after appropriate rescaling, is used as a regularization parameter within the Linear Sampling Method to validate the initial reconstruction.
The neural operator is implemented as a DeepONet with a fixed radial-basis-function trunk, while the noise level required for rescaling is estimated using a dedicated neural network.
A neural tangent kernel analysis guides the architectural design, reducing the network tuning to a single discretization parameter, adjustable according to the wavelength.
Two-dimensional numerical experiments demonstrate the method's effectiveness, with a Python toolbox provided for reproducibility.
\end{abstract}

\begin{keywords}
inverse acoustic scattering, Tikhonov regularization, Linear Sampling Method, neural networks, neural operators, DeepONets
\end{keywords}

\section{Introduction}

Inverse scattering problems aim at determining the properties of a medium from its response to one or several incident waves. 
Such problems arise in numerous practical applications, including non-destructive testing and medical imaging. 
A wide range of reconstruction techniques has been developed to tackle them~\cite{garnier2015,borcea2002,colton_kress}.  
We focus here on fixed-frequency approaches using multistatic data, in particular sampling methods~\cite{colton2003linear,ito2012}. 
More precisely, we consider the Linear Sampling Method~(LSM), originally introduced by Colton and Kirsch in~\cite{colton1996}. 
The LSM is typically regularized via Tikhonov regularization, with the parameter selected according to Morozov's discrepancy principle.
Although this strategy generally provides reliable reconstructions, it suffers from two main drawbacks in our setting. 
First, it requires prior knowledge of the noise level $\delta$, which is rarely available in practice. 
Second, determining the regularization parameter $\alpha(\bs{z})$ entails solving a nonlinear equation at each sampling point $\bs{z}$, resulting in a significant computational burden.  
This choice, however, guarantees robustness with respect to the noise level as well as to the size and shape of the obstacle. 
In contrast, it has been observed~\cite{catapano2007} that using a constant regularization parameter---independent of both $\delta$ and $\bs{z}$---can still yield acceptable reconstructions at a substantially lower computational cost, albeit with reduced robustness.
Our goal is therefore to strike a balance between robustness and computational efficiency.

With the rapid development of Scientific Machine Learning (SciML)~\cite{montanelli2025} and its demonstrated success in solving forward problems~\cite{karniadakis2021,li2020,raissi2019}, increasing attention has been devoted to its application to inverse problems~\cite{habring2024,haltmeier2023,molinaro2023}. 
In particular, several recent works have investigated the use of deep neural networks for inverse scattering problems~\cite{lin2024,pourahmadian2025,zhang2025,zhou2023}.  
The proposed strategies range from sophisticated physics-informed architectures designed to solve the full inverse problem end-to-end, to more lightweight networks that either provide an initial guess or enhance reconstructions obtained by classical methods. 
The present work adopts the latter perspective and relies on the neural operator framework~\cite{azizzadenesheli2024,kovachki2023}, which generalizes standard neural networks by enabling the approximation of mappings between function spaces.

We introduce a data-driven neural operator approach that replaces Morozov's discrepancy principle, addressing the limitations discussed above. 
Since Morozov's regularization parameters often resemble scaled indicator functions of the obstacle, we train a neural operator to predict a normalized indicator function capturing its position and shape. 
The appropriate scaling factor is then estimated from the noise level, which is predicted by a secondary neural network. 
The rescaled indicator is subsequently incorporated into the LSM, allowing the method to confirm or refine the initial neural network prediction. 
In this way, our hybrid approach combines data-driven learning with classical inversion: it first generates a neural-network-based indicator and then integrates it into the LSM for validation and enhanced reconstruction.

The remainder of this paper is organized as follows. 
In \cref{section_lsm}, we introduce the forward and inverse scattering problems and present numerical results obtained with the standard LSM combined with Morozov's discrepancy principle. 
\Cref{section_NN_ind} describes the proposed neural operator framework for constructing an initial indicator function of the obstacle. 
\Cref{section_noise} and \cref{section_reg} detail the regularization strategy, covering noise-level estimation and its incorporation into the regularization function, and present the corresponding numerical results, with a particular comparison between the neural and LSM indicators.

\section{Solving the inverse problem with the LSM}\label{section_lsm}

In this section, we introduce the forward and inverse scattering problems, and review the LSM. 

\subsection{Forward and inverse scattering problems}

\paragraph{Forward problem}
We consider the propagation of acoustic waves in a two-dimensional medium containing a sound-soft obstacle $D$. 
Let $u^i$ denote an incident wave, solution to the Helmholtz equation $\Delta u^i + k^2 u^i = 0$ in $\R^2$, where $k$ is the wavenumber. 
The incident field $u^i$ gives rise to a scattered field $u^s$ that solves
\begin{align}
\left\{
\begin{array}{l}
\Delta u^s + k^2 u^s = 0 \quad \text{in } \mathbb{R}^2 \setminus \overline{D}, \\[1mm]
u^s = -u^i \quad \text{on } \partial D, \\[1mm]
u^s \text{ is radiating.}
\end{array}
\right.
\label{pb_dirichlet}
\end{align}
The (Sommerfeld) radiation condition reads
\[
\lim_{r \to \infty} \sqrt{r} \left( \frac{\partial u^s}{\partial r} - i k u^s \right) = 0, \quad r = |x|, \quad \text{uniformly in } \hat{x} = x/|x|.
\]
The forward scattering problem is linear and well-posed. In particular, there is a unique solution $u^s \in H^1_{\mathrm{loc}}(\mathbb{R}^2 \setminus D)$ \cite[Thm.~3.11]{colton_kress}.
Furthermore, the scattered field admits the asymptotic expansion
\begin{align}
u^s(x) = \frac{e^{ik|x|}}{\sqrt{|x|}} \Big( u_\infty(\hat{x}) + \mathcal{O}(|x|^{-1}) \Big),
\quad |x| \to \infty,
\label{ff_expansion}
\end{align}
where $u_\infty: \mathbb{S}^1 \to \mathbb{C}$ is the far-field pattern.

\paragraph{Inverse problem}
We focus on the inverse scattering problem, which consists in recovering the geometry of the obstacle, namely $\partial D$ in \cref{pb_dirichlet}, from measurements of the far-field pattern $u_\infty$. The inverse scattering problem is nonlinear and severely ill-posed.

\subsection{LSM with Tikhonov--Morozov regularization}

The LSM was introduced in 1996 by Colton and Kirsch \cite{colton1996}. 
It transforms the nonlinear inverse scattering problem into a family of linear problems.
The LSM is closely related to the factorization method \cite{kirsch2007} and has been extended to several variants, including the Generalized LSM (GLSM) \cite{audibert2014} and passive imaging \cite{garnier2023, garnier2024}.

\paragraph{Methodology}
Consider a probing domain $\Omega \subset \mathbb{R}^2$ and incident plane waves,
\[
u^i(x,\hat{d}) = e^{ik x \cdot \hat{d}}, 
\qquad x\in \mathbb{R}^2, \ \hat{d} \in \mathbb{S}^1,
\] 
with associated scattered field $u^s(x,\hat{d})$ and far-field pattern $u_\infty(\hat{x},\hat{d})$.
Let $\hat{d}_1,\dots,\hat{d}_n$ denote $n$ equispaced incident directions (emitters) and $\hat{x}_1,\dots,\hat{x}_m$ denote $m$ equispaced measurement directions (sensors); see \cref{lsm_setup}. The measurements are collected in the far-field matrix $F \in \mathbb{C}^{m\times n}$:
\[
F_{ij} = u_\infty(\hat{x}_i, \hat{d}_j), \qquad 1\le i \le m, \ 1\le j \le n.
\]
The LSM constructs an indicator function $I:\Omega \to \mathbb{R}$ of the defect as follows.
For each sampling point $z \in \Omega$, consider the linear system
\begin{align}
F g_{z} = \phi_{z}, 
\qquad (\phi_{z})_i = \frac{e^{i\pi/4}}{\sqrt{8\pi k}} e^{-ik \hat{x}_i \cdot z},
\label{non_regularized_eq}
\end{align}
where $g_{z} \in \mathbb{C}^n$ is unknown and $\phi_{z} \in \mathbb{C}^m$ corresponds to the far-field of the fundamental solution of the Helmholtz equation. 
The indicator is then defined as 
\[
I(z) := \| g_{z} \|_2^{-1} \qquad \text{(LSM indicator)}.
\]

\begin{figure}[t]
\centering
\begin{tikzpicture}[scale=0.8]
\draw[black, thick, fill=black!10] (0,0) ellipse (1cm and 0.5cm);
\node[anchor=north] at (0, 0.2) {$D$};
\draw[black, dashed] (0,0) circle (2.2);
\draw[draw=black!30] (-4,-3) rectangle ++(8,6);
\node at (-3.5,2.5) {$\Omega$};
\node[draw, black, fill=black, cross, thick, scale=0.55, rotate=45] at (2.2*0.7071, 2.2*0.7071) {};
\node[draw, black, fill=black, cross, thick, scale=0.55, rotate=45] at (2.2*0.7071, -2.2*0.7071) {};
\node[draw, black, fill=black, cross, thick, scale=0.55, rotate=45] at (2.2*0.9239,2.2*0.3827) {};
\node[draw, black, fill=black, cross, thick, scale=0.55, rotate=45] at (2.2*0.9239,-2.2*0.3827) {};
\node[draw, black, fill=black, cross, thick, scale=0.55, rotate=45] at (2.2,0) {};
\node at (2.2+0.5,0) {$\hat{x}_i$};
\node[draw, black, fill=black, circle, thick, scale=0.55] at (-2.2*0.7071, 2.2*0.7071) {};
\node[draw, black, fill=black, circle, thick, scale=0.55] at (-2.2*0.7071, -2.2*0.7071) {};
\node[draw, black, fill=black, circle, thick, scale=0.55] at (-2.2*0.9239,2.2*0.3827) {};
\node[draw, black, fill=black, circle, thick, scale=0.55] at (-2.2*0.9239,-2.2*0.3827) {};
\node[draw, black, fill=black, circle, thick, scale=0.55] at (-2.2,0) {};
\node at (-2.2-0.5,0) {$\hat{d}_j$};
\end{tikzpicture}
\caption{Example setup for the LSM: the probing domain $\Omega$, the defect $D$, sources $\hat{d}_j$, and sensors $\hat{x}_i$. In the multistatic configuration considered here, multiple incident waves are emitted from different directions $\hat{d}_j$ and the scattered field is measured at multiple receiver locations $\hat{x}_i$, yielding a full matrix of measurements.}
\label{lsm_setup}
\end{figure}
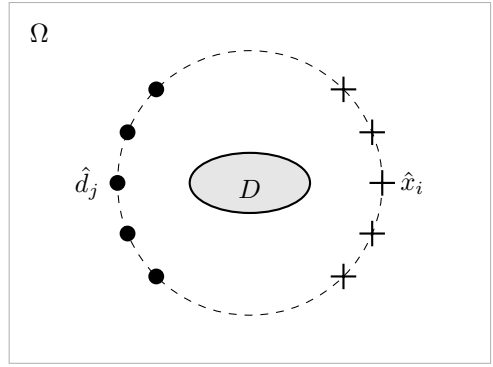

\paragraph{Tikhonov--Morozov regularization}
The system \eqref{non_regularized_eq} is ill-posed, and in practice only a noisy matrix $F_\delta$ is available, with noise level $\delta := \| F - F_\delta \|_2$. 
It is therefore Tikhonov-regularized,
\begin{align}
F_\delta^* F_\delta g_{z} + \alpha_{z} g_{z} = F_\delta^* \phi_{z},
\label{regularized_eq}
\end{align}
where $\alpha_{z} > 0$ is the regularization parameter, chosen via Morozov's discrepancy principle:
\begin{align}
\| F_\delta g_{z} - \phi_{z} \|_2 = \delta \, \| g_{z} \|_2.
\label{morozov}
\end{align}
The parameter $\alpha_{z}$ is obtained by solving a nonlinear equation involving the SVD of $F_\delta$~\cite[Sec.~5]{garnier2023}.

\subsection{Practical LSM setup}\label{LSM_setup}

\begin{figure}[t]
\centering
\def\scl{0.39}
\begin{tabular}{cc}
\text{Morozov regularizer $\alpha(z)$} & \text{LSM indicator $I(z)$} \\
\includegraphics[scale=\scl]{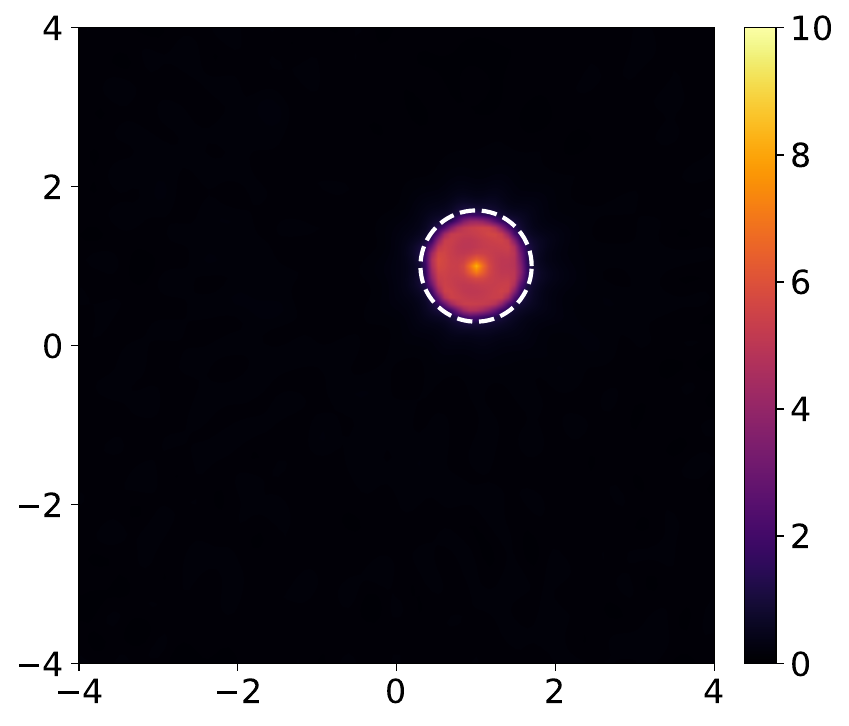} & 
\includegraphics[scale=\scl]{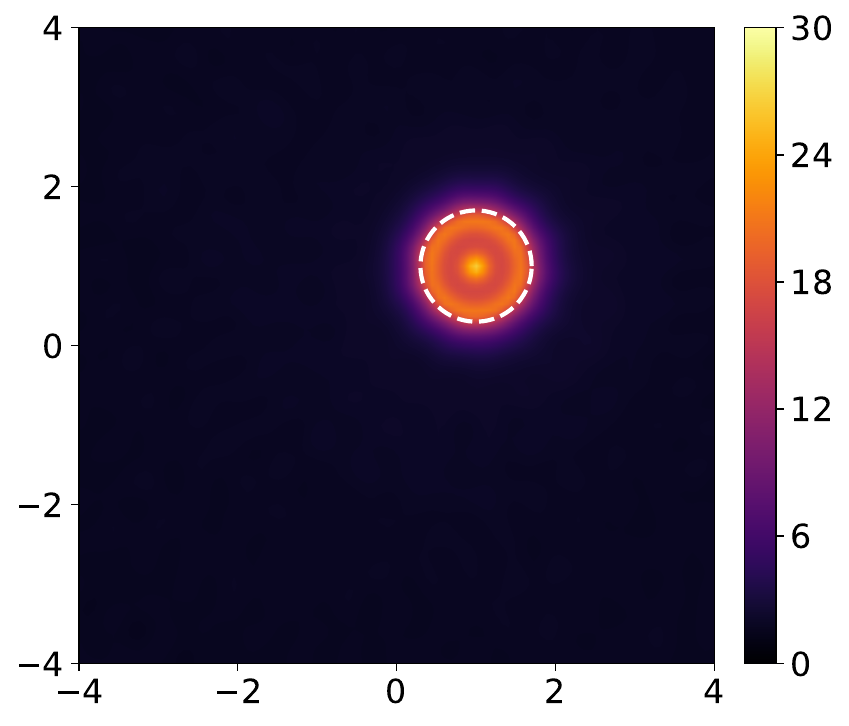} \\
\end{tabular}
\caption{Morozov regularizer (left) and corresponding LSM indicator function (right) for $m=n=30$. 
The Morozov regularization function computed from \cref{morozov} is used in the Tikhonov-regularized LSM linear system \cref{regularized_eq}. 
This is the standard LSM workflow, which we aim to improve in this paper.}
\label{fig_morozov}
\end{figure}

\paragraph{Sources and sensors}
Throughout this paper, we adopt the following setup for numerical experiments. 
We consider $m$ equispaced measurement directions and $n$ equispaced sources,
\[
\hat{x}_i = \bigl(\cos \theta_i, \, \sin \theta_i\bigr), \quad
\hat{d}_j = \bigl(\cos \phi_j, \, \sin \phi_j\bigr),
\]
with
\[
\theta_i = 2\pi \frac{i-1}{n}, \quad
\phi_j = 2\pi \frac{j-1}{m}, \quad
1 \le i \le n, \quad
1 \le j \le m.
\]
The sampling domain is set to $\Omega = [-\lambda L, \lambda L]^2$ with $L=4$, discretized on a $100\times 100$ uniform grid. 
The wavenumber is chosen as $k = 2\pi$, yielding a wavelength $\lambda = 1$.

\paragraph{Solving the forward problem}
For a given medium containing a defect, assembling the far-field matrix $F$ requires solving one direct problem for each incident wave. 
For circular obstacles, analytical solutions are available (see \cref{appendix_a}). 
For more general geometries, we use a Nystr\"{o}m method~\cite{atkinson1992} applied to the boundary integral formulation of the problem. 
For more complex or non-smooth obstacles, boundary element methods provide a suitable alternative~\cite{montanelli2025b,montanelli2022,montanelli2024a}.

\paragraph{Noise}
Once the matrix $F$ is computed, we corrupt it with multiplicative Gaussian noise:
\begin{align}
F_\delta = F + \eta \, F \odot (X + i \, Y),
\label{noising}
\end{align}
where $X$ and $Y$ are $m\times n$ matrices with i.i.d.\ standard normal entries, 
$\odot$ denotes componentwise (Hadamard) multiplication, and $\eta > 0$ controls the noise level $\delta$.\footnote{
Since $\mathbb{E}[\|F_\delta - F\|_2^2] \le \mathbb{E}[\|F_\delta - F\|_F^2] = 2\eta^2 \|F\|_F^2$, 
the absolute noise level $\delta$ is proportional to $\eta$ in mean square.
}

\paragraph{Computations}
All computations are performed using our Python toolbox. 
Some LSM reconstructions, together with the corresponding regularization functions, are shown in \cref{fig_morozov}. 
In the remainder of the paper, we aim to eliminate the need for prior knowledge of the noise level and to replace Morozov's discrepancy principle with a neural operator approach.

\subsection{Our hybrid LSM approach}

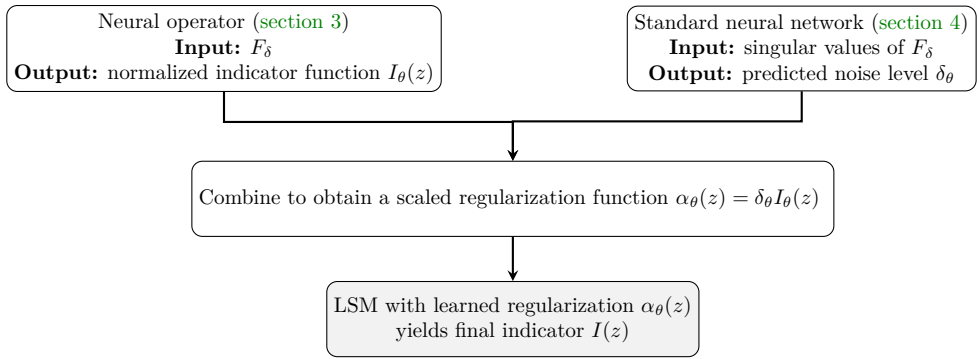
\begin{figure}[t]
\centering

\begin{tikzpicture}[node distance=2cm, every node/.style={align=center}, scale=0.8, transform shape]

\tikzstyle{process} = [rectangle, draw, rounded corners, minimum width=4cm, minimum height=1.25cm, text centered]
\tikzstyle{startstop} = [rectangle, draw, rounded corners, minimum width=4cm, minimum height=1.25cm, text centered, fill=gray!10]]
\tikzstyle{arrow} = [thick,->,>=stealth]

\node (noise) [process] at (4.8,0) {Standard neural network (\cref{section_noise}) \\ \textbf{Input:} singular values of $F_{\delta}$ \\ \textbf{Output:} predicted noise level $\delta_\theta$};
\node (DeepONet) [process] at (-4.8,0) {Neural operator (\cref{section_NN_ind}) \\ \textbf{Input:} $F_{\delta}$ \\ \textbf{Output:} normalized indicator function $I_\theta(\bs{z})$ };

\node (combine) [process] at (0,-2.5) {Combine to obtain a scaled regularization function $\alpha_\theta(z) = \delta_\theta   I_\theta(z)$
};
\node (lsm) [startstop, below of=combine, yshift=0.0cm] {LSM with learned regularization $\alpha_\theta(z)$\\ yields final indicator $I(z)$};

\draw [arrow] (noise.south) -- ++(0,-0.5) -| (combine.north);
\draw [arrow] (DeepONet.south) -- ++(0,-0.5) -| (combine.north);
\draw [arrow] (combine.south) -- ++(0,0) -- (lsm.north);

\end{tikzpicture}
\label{diagram}
\caption{Schematic overview of the proposed methodology: the neural operator and the noise prediction network are trained independently; their outputs are combined to construct a regularization function, which is then used within the LSM.}
\end{figure}

The workflow is summarized in \cref{diagram}.  
We propose a hybrid SciML approach that combines the LSM with two independently trained neural networks. 
The first, implemented as a neural operator, maps the noisy far-field matrix $F_\delta$ to a normalized indicator function $I_\theta(z)$ encoding geometric information on the location and size of the scatterer.  
The second network predicts the noise level $\delta_\theta$ directly from the singular values of $F_\delta$.
For a given noisy far-field matrix, the two outputs are combined to define a regularization function
\begin{align}
\alpha_\theta[F_\delta](z) = \delta_\theta[F_\delta]\,
I_\theta[F_\delta](z),
\label{final_alpha_nn}
\end{align}
which incorporates both the estimated noise magnitude and prior geometric information on the obstacle.  
This learned regularization is then used within the LSM to compute the final indicator function $I(\bs{z})$, allowing for a direct comparison with the initial neural-network-based indicator.


This work illustrates the core philosophy of hybrid SciML: neural networks are not used as a replacement for classical inversion techniques, but rather as a means to enhance them by injecting learned information into theoretically grounded algorithms.  
To keep the approach practical and reproducible, we deliberately adopt simple neural architectures trained exclusively on analytical solutions.  
As a result, both training and inference can be carried out on standard hardware (e.g., a laptop), without requiring large-scale computational resources.
Further details of each component are provided in the following sections.

\section{Neural operator indicator function}
\label{section_NN_ind}

We now introduce a neural operator architecture based on Radial Basis Functions (RBFs), combining the original DeepONet \cite{lu2019} with classical RBF theory \cite{buhmann2003}. 
The proposed RBF-DeepONet is designed to map the full far-field matrix to a spatial indicator function representing the scatterer. 

\subsection{RBF-DeepONet}
\label{RBF_DeepONet}
\paragraph{DeepONets for inverse scattering}
We adopt the framework of neural operators, which generalize standard neural networks by learning mappings between function spaces.  
This choice is natural in our setting, as the inverse problem amounts to mapping the far-field operator
\begin{align*}
\mathcal{F} : L^2(\Sph^1) \to L^2(\Sph^1)
\quad \text{(see \cref{ff_operator} for details)}
\end{align*}
to an indicator function $I : \Omega \to \mathbb{R}$.
In practice, the data available in the LSM is represented by the far-field matrix $F$, which consists of pointwise evaluations of $\mathcal{F}$.  
This naturally motivates the use of the \emph{DeepONet} architecture \cite{lu2019}, which is designed to operate directly on pointwise samples of the input operator.
In our setting, the DeepONet takes the form
\begin{align}
I_{\theta}[F](z) := u_{\theta}(F)^{\top} v_{\theta}(\bs{z}),
\label{DeepONet}
\end{align}
where $F \in \mathbb{C}^{m_0 \times n_0}$ denotes the far-field matrix, $z \in \Omega$ a sampling point, $u_{\theta} : \mathbb{C}^{m_0 \times n_0} \to \mathbb{R}^{p}$ the branch network, and $v_{\theta} : \mathbb{R}^{2} \to \mathbb{R}^{p}$ the trunk network.  
Here, $p$ denotes the dimension of the output of both networks.
Note that the network can only process far-field matrices of size $m_0 \times n_0$. To overcome this limitation, any input far-field matrix with a different resolution $m \times n$ is first interpolated to the prescribed size $m_0 \times n_0$ using Fourier interpolation \cite{montanelli2015b,montanelli2017phd}.

\paragraph{RBF-DeepONets}
In the original DeepONet formulation, both the branch and trunk networks are modeled as trainable multilayer perceptrons (MLPs), resulting in a large number of trainable parameters.  
In our setting, however, the target outputs are positive indicator functions, which form a restricted class of functions.  
We therefore fix the trunk representation by prescribing a suitable set of basis functions, so that the trunk no longer depends on any trainable parameters.
The resulting architecture is illustrated in \cref{fig:DeepONet}.

The choice of basis functions is thus a critical modeling decision.  
We adopt RBFs, which have been extensively studied in the literature \cite{buhmann2003,fornberg2015,powell1987}.  
Introducing a set of center locations $\{z_i\}_{i=1}^{p}$ and a radial function $\phi_{\epsilon}(r)$, where $\epsilon>0$ is a width parameter, we define the basis functions by
\[
v^{i}(z) := \phi_{\epsilon}\!\left(\lvert z-z_{i} \rvert\right),
\qquad i=1,\ldots,p.
\]
In all experiments, we use Gaussian RBFs of the form $\phi_{\epsilon}(r)=\exp(-\epsilon r^{2})$, which are standard in many RBF-based approximation schemes.
It is also possible to enforce positivity of the DeepONet output by applying a positive function $\sigma_{+}$ to the output of the branch net.  
We refer to the resulting architecture as a \emph{RBF-DeepONet}.

\begin{figure}[t] 
\centering
\begin{tikzpicture}[
  node distance=1.1cm and 1.2cm,
  every node/.style={font=\small},
  netnode/.style={draw, rounded corners, minimum width=2.0cm, minimum height=0.7cm, align=center},
  input/.style={circle, draw, minimum size=0.6cm},
  output/.style={draw, minimum width=3.0cm, minimum height=1.0cm, align=center},
  arrow/.style={-Latex, thick}
]

\node[input] (F) {$F$};
\node[netnode, right=0.8cm of F] (branch) {Branch \\ $u_{\theta}(F)$};
\node[netnode, right=0.8cm of branch] (coeffs) {$u_{\theta}^1(F),\dots,u_{\theta}^p(F)$};

\node[input, below=1.5cm of F] (z) {$\bs{z}$};
\node[netnode, right=0.8cm of z] (trunk) {Trunk \\ $v(z)$};
\node[netnode, right=0.8cm of trunk] (basis) {$v^1(z),\dots,v^p(z)$};

\node[output, right=2.0cm of coeffs, yshift=-1.05cm] (out) 
{$I_{\theta}[F](z) = \displaystyle \sum_{i=1}^{p} u_{\theta}^i(F)\,v^i(z)$};

\draw[arrow] (F) -- (branch);
\draw[arrow] (z) -- (trunk);
\draw[arrow] (branch) -- (coeffs);
\draw[arrow] (trunk) -- (basis);

\coordinate (midpoint) at ($(out.west) + (-0.4cm,0)$); 

\draw[arrow] (coeffs.east) -| (midpoint) |- (out.west);
\draw[arrow] (basis.east) -| (midpoint) |- (out.west);

\node[above=0.2cm of branch] {\textbf{Trainable branch net}};
\node[above=0.2cm of trunk] {\textbf{Fixed RBF trunk net}};

\end{tikzpicture}
\caption{Schematic architecture of our RBF-DeepONet. The trainable branch net generates coefficients in the fixed trunk net basis.}
\label{fig:DeepONet}
\end{figure}
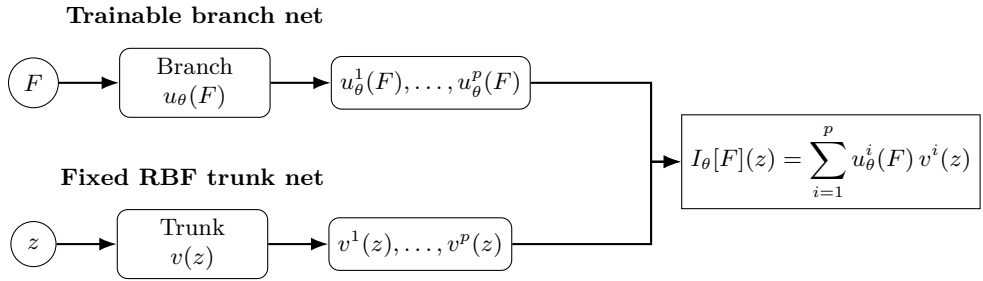

\subsection{Training}

We introduce a resolution parameter $h$, analogous to the mesh size in finite element methods. 
We found that $h = \lambda/2$ is a good compromise between accuracy and computational cost. 
Placing the basis functions uniformly in the square domain $\Omega = [-\lambda L,\, \lambda L]^2$ yields
\[
n_h
= \left\lfloor \frac{2\lambda L}{h} \right\rfloor + 1 \;\; \text{(basis functions per dimension)},
\quad
p_h = n_h^2 \;\; \text{(total number of basis functions)}.
\]

\paragraph{Training data}
The training data consist exclusively of far-field matrices $F_i$ corresponding to disks, which enables fast data generation since analytical far-field expressions are available (see \cref{appendix_a}). 
The disks are uniformly distributed in $\Omega$. 
Specifically, along each side of the square domain, we place $4n_h$ equispaced positions $c_i$ in both the $x$- and $y$-directions, yielding a total of $16p_h$ training samples. 
For each position $c_i$, the radius $r_i$ of the disk is drawn uniformly at random in the interval $[\lambda/2,\,1.5\lambda]$.
The target indicator function is defined by
\[
I[F_i](z) =
\begin{cases}
1, & \text{if } \lvert z - c_i \rvert \le r_i, \\
0, & \text{otherwise}.
\end{cases}
\]

\paragraph{Loss}
Let $\{F_i\}_{i=1}^{16p_h}$ denote the $m \times n$ training matrices and $\{I[F_i]\}_{i=1}^{16p_h}$ the corresponding label functions. 
We define the loss function as
\begin{align*}
\mathcal{L}(\theta)
= \frac{1}{16p_h^2}
\sum_{i=1}^{16p_h}
\sum_{j=1}^{p_h}
\bigl|
I_{\theta}[F_i](z_j) - I[F_i](z_j)
\bigr|^2 .
\end{align*}
The quadrature points used to measure the discrepancy between the network output $I_{\theta}[F_i]$ and the reference function $I[F_i]$ are chosen as the centers of the basis functions $\{z_j\}_{j=1}^{p_h}$.
All parameters are listed in \cref{training_info_DeepONet}. 
We found this setup to be a good compromise between computational cost and performance.

\begin{table}[t!]
\centering
\caption{Training setup retained for our RBF-DeepONet, achieving a good compromise between computational speed and performance.}
\label{training_info_DeepONet}
\begin{tabular}{lc}
\toprule
Resolution parameter & $h=\lambda/2$\\
Number of basis functions & $p_h=(\lfloor 2 \lambda L/h \rfloor + 1)^2$\\
Training obstacles type & disks \\
Number of training samples & $16p_h$ \\
Training  positions $c_i$ & uniform grid on $\Omega$ \\
Training  radii $r_i$ & random in $[\lambda/2,1.5\lambda]$ \\
Label training functions & binary indicator \\
\bottomrule
\end{tabular}
\end{table}

\subsection{Neural Tangent Kernel analysis}

We choose Gaussian radial basis functions of the form 
$\phi_{\epsilon}(r) =\exp(-\epsilon r^2)$. 
The physical problem imposes a characteristic wavelength $\lambda$, and we fix the resolution parameter to $h = \lambda/2$. 
The remaining design parameter is therefore the shape parameter $\epsilon$ of the Gaussian basis functions.
To relate $\epsilon$ to the resolution scale $h$, we introduce an overlap parameter $s \in (0,1)$ defined by
\[
\phi_{\epsilon}(h) = s,
\]
which measures the value of a basis function at the nearest neighboring center. 
This yields the explicit relation
\[
\epsilon = -\frac{\log s}{h^2}.
\]
The parameter $s$ thus directly controls the overlap between adjacent basis functions: 
smaller values of $s$ lead to more localized Gaussians (larger $\epsilon$), while larger values of $s$ produce smoother and more overlapping basis functions. 
This behavior is illustrated in 1D in \cref{fig:rbf_1D_comparison}.

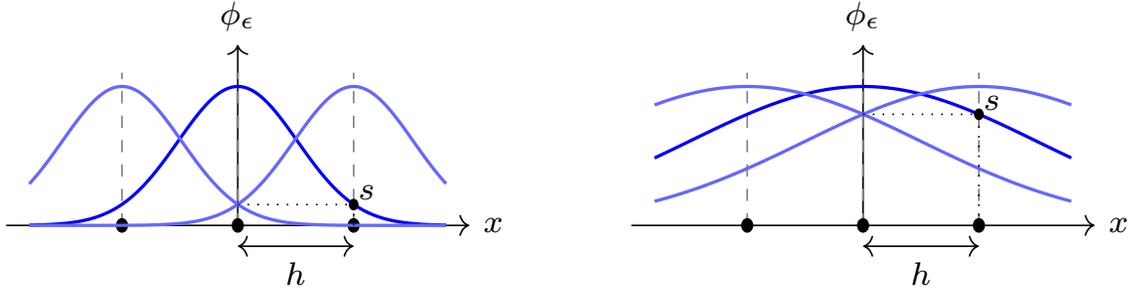
\begin{figure}[t]
\centering
\begin{minipage}{0.45\textwidth}
\centering
\resizebox{\textwidth}{!}{%
\begin{tikzpicture}[yscale=1.2, font=\footnotesize] 

\def\h{1.0}   
\def\s{0.15}
\pgfmathsetmacro{\eps}{-ln(\s)/(\h*\h)}

\draw[->] (-2,0) -- (2,0) node[right] {$x$};
\draw[->] (0,0) -- (0,1.3) node[above] {$\phi_\epsilon$};

\foreach \c in {-1,0,1} {
    \draw[dashed, gray] (\c*\h,0) -- (\c*\h,1.1);
    \fill (\c*\h,0) circle (1.5pt);
}

\draw[thick, blue, domain=-1.8:1.8, samples=200] plot (\x,{exp(-\eps*(\x)^2)});
\draw[thick, blue!60, domain=-1.8:1.8, samples=200] plot (\x,{exp(-\eps*(\x-\h)^2)});
\draw[thick, blue!60, domain=-1.8:1.8, samples=200] plot (\x,{exp(-\eps*(\x+\h)^2)});

\draw[<->] (0,-0.15) -- (\h,-0.15);
\node at (\h/2,-0.35) {$h$};
\draw[dotted] (\h,0) -- (\h,\s);
\draw[dotted] (0,\s) -- (\h,\s);
\fill (\h,\s) circle (1.2pt);
\node[anchor=south west, inner sep=1pt] at (\h,\s) {$s$};

\end{tikzpicture}%
}
\end{minipage}%
\hfill 
\begin{minipage}{0.45\textwidth}
\centering
\resizebox{\textwidth}{!}{%
\begin{tikzpicture}[yscale=1.2, font=\footnotesize] 

\def\h{1.0}    
\def\s{0.8}
\pgfmathsetmacro{\eps}{-ln(\s)/(\h*\h)}

\draw[->] (-2,0) -- (2,0) node[right] {$x$};
\draw[->] (0,0) -- (0,1.3) node[above] {$\phi_\epsilon$};

\foreach \c in {-1,0,1} {
    \draw[dashed, gray] (\c*\h,0) -- (\c*\h,1.1);
    \fill (\c*\h,0) circle (1.5pt);
}

\draw[thick, blue, domain=-1.8:1.8, samples=200] plot (\x,{exp(-\eps*(\x)^2)});
\draw[thick, blue!60, domain=-1.8:1.8, samples=200] plot (\x,{exp(-\eps*(\x-\h)^2)});
\draw[thick, blue!60, domain=-1.8:1.8, samples=200] plot (\x,{exp(-\eps*(\x+\h)^2)});

\draw[<->] (0,-0.15) -- (\h,-0.15);
\node at (\h/2,-0.35) {$h$};
\draw[dotted] (\h,0) -- (\h,\s);
\draw[dotted] (0,\s) -- (\h,\s);
\fill (\h,\s) circle (1.2pt);
\node[anchor=south west, inner sep=1pt] at (\h,\s) {$s$};

\end{tikzpicture}%
}
\end{minipage}

\caption{One-dimensional Gaussian radial basis functions with identical center spacing $h$ and two different overlap parameters: $s=0.15$ (left) and $s=0.8$ (right). Larger values of $s$ yields broader overlap between basis functions, whereas smaller values produce more localized functions.}
\label{fig:rbf_1D_comparison}
\end{figure}

To guide us in our choice of overlap parameter $s$, we utilize of the Neural Tangent Kernel (NTK) theory. First introduced in \cite{jacot2018}, the NTK provides a framework to describe the training dynamics of neural networks by considering gradient flow, where the training parameters evolve continuously as $\theta=\theta(t)$. For a network $f_\theta : \mathbb{R}^d \to \mathbb{R}^{d'}$ with parameters 
$\theta \in \mathbb{R}^P$, trained on a dataset $\{(x_i, y_i)\}_{i=1}^N$ 
with $y_i \in \mathbb{R}^{d'}$, the empirical vector training error can be written as
\[
e(t) = 
\begin{pmatrix}
f_\theta(x_1) - y_1\\
\vdots\\
f_\theta(x_N) - y_N
\end{pmatrix} \in \mathbb{R}^{N d'}.
\]
Under gradient flow, its evolution satisfies
\[
\frac{d}{dt} e(t) = - K(t)\, e(t),
\]
where $K(t) \in \mathbb{R}^{N d' \times N d'}$ is the Neural Tangent Kernel, 
defined from the gradients of the network evaluated at training points (see \cref{appendix:ntk_classic}). 
In general, $K(t)$ depends on the parameters $\theta(t)$ and is thus time-dependent, 
but in the infinite-width limit, it converges to a constant deterministic kernel 
$K_\infty$, reducing the training dynamics to an autonomous linear  system. 
This perspective provides qualitative insight into convergence and generalization 
properties of neural networks.

In the case of a RBF-DeepONet with $p$ basis functions and $N$ training samples, 
$K $
can be written
\[
K = \bigl(I_{N} \otimes P_{\epsilon}^\top\bigr)\, G \,\bigl(I_{N} \otimes P_{\epsilon}\bigr) \in \mathbb{R}^{(N p) \times  (Np)},
\]
where $P_\epsilon \in \mathbb{R}^{p\times p}$ is defined  by
\[
(P_\epsilon)_{ij}
=
\phi_\epsilon\!\left(|z_j-z_i|\right),
\qquad i,j=1,\dots,p,
\] and $G \in \mathbb{R}^{(Np)\times(Np)} $ is the NTK matrix of the trainable branch net (see \cref{appendix:ntk_DeepONet} for details). 
The factorization of $K$ leads to bounds regarding its spectrum.
\begin{theorem}
\label{th_NTK}
The following bounds for the spectrum of $K$ holds:

$$
\lambda_{\min}(G)\, \sigma_{\min}(P_{\epsilon})^{2} 
\leq \lambda_{\min}(K) 
\leq \lambda_{\max}(K) 
\leq \lambda_{\max}(G)\, \sigma_{\max}(P_{\epsilon})^{2},
$$
where $\sigma_{\min}(P_{\epsilon})$ and $\sigma_{\max}(P_{\epsilon})$ denote the
minimum and maximum singular values of $P_{\epsilon}$. 
\end{theorem}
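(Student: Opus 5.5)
We bound the Rayleigh quotient of $K$ and read off both extreme eigenvalues via the Courant--Fischer characterization. The key structural facts are that $G$ is symmetric positive semidefinite and that $K = B^\top G B$ is a congruence, where $B := I_N \otimes P_\epsilon$. Here $G$ is a Gram matrix of branch-net parameter gradients, $G = J^\top J$ with $J$ the Jacobian of the stacked branch outputs; see \cref{appendix:ntk_DeepONet}. Consequently $K$ is symmetric positive semidefinite, its eigenvalues are real, and
\[
\lambda_{\min}(K) = \min_{x\neq 0} \frac{x^\top K x}{x^\top x}, \qquad \lambda_{\max}(K) = \max_{x\neq 0} \frac{x^\top K x}{x^\top x}.
\]

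For any $x \in \mathbb{R}^{Np}$ set $y := Bx$. Then $x^\top K x = y^\top G y$, and since $G$ is symmetric,
\[
\lambda_{\min}(G)\,\|y\|_2^2 \le y^\top G y \le \lambda_{\max}(G)\,\|y\|_2^2 .
\]
We next control $\|y\|_2 = \|Bx\|_2$ using the singular values of $B$. The singular values of a Kronecker product are the products of the factors' singular values, and those of $I_N$ are all $1$. Hence the singular values of $B$ coincide with those of $P_\epsilon$, each repeated $N$ times. This gives
\[
\sigma_{\min}(P_\epsilon)^2\,\|x\|_2^2 \le \|Bx\|_2^2 \le \sigma_{\max}(P_\epsilon)^2\,\|x\|_2^2 .
\]
One can see this directly: $B^\top B = I_N\otimes (P_\epsilon^\top P_\epsilon)$ is block diagonal with identical blocks, so its eigenvalues are exactly those of $P_\epsilon^\top P_\epsilon$.

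Chaining the two estimates requires a sign check. For the lower bound we multiply $\lambda_{\min}(G)\ge 0$, which holds by positive semidefiniteness, by the lower bound on $\|y\|_2^2$. For the upper bound we use $\lambda_{\max}(G)\ge 0$ similarly. This yields
\[
\lambda_{\min}(G)\,\sigma_{\min}(P_\epsilon)^2\,\|x\|_2^2 \le x^\top K x \le \lambda_{\max}(G)\,\sigma_{\max}(P_\epsilon)^2\,\|x\|_2^2 .
\]
Taking the minimum and maximum over unit $x$ gives the claimed chain of inequalities. The middle inequality $\lambda_{\min}(K)\le\lambda_{\max}(K)$ is trivial.

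The only genuinely delicate point is the nonnegativity of $\lambda_{\min}(G)$ used in the lower bound. If $\lambda_{\min}(G)$ could be negative, multiplying it by the lower bound on $\|y\|_2^2$ would reverse the inequality. We therefore state explicitly that $G$ is a Gram matrix of gradients. This is immediate from the NTK definition in \cref{appendix:ntk_classic}, which is also where the symmetry of $K$, needed for Rayleigh quotients, comes from.
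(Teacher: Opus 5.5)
Your proof is correct and follows essentially the same route as the paper. The paper proves a general congruence lemma for $B = Q^\top A Q$ by bounding the Rayleigh quotient through $y = Qx$, then applies it with $Q = \mathbb{P}_{\epsilon} = I_N \otimes P_{\epsilon}$, using that $\mathbb{P}_{\epsilon}$ and $P_{\epsilon}$ share singular values, exactly as you do. Your version is slightly more careful: the paper's lemma assumes $G$ positive definite and implicitly divides by $\|Qx\|^2$, whereas you only need positive semidefiniteness of the Gram matrix $G$ (minor slip: with $J \in \mathbb{R}^{Np \times P}$ the stacked Jacobian, that Gram matrix is $J J^\top$ rather than $J^\top J$).
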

The proof can be found in \cref{appendix:ntk_DeepONet}.
We seek to design a network architecture that minimizes the conditioning of the kernel matrix $K$, thus allowing efficient training. According to the bound above, this requires reducing the condition numbers of both $P_{\epsilon}$ and $G$. In particular, this motivates careful selection of the branch-network architecture, both in terms of width and depth, as well as the choice of the overlap parameter $s$ for the radial basis functions. We illustrate this with NTK numerical experiments in a setting where $\lambda=1$, $h = 1/2$, $p = 121$, and $N = 1936$, without any positivity-enforcing function, so that the time-independent NTK regime holds. To reduce the computational cost, all calculations are performed on a single batch, which results in matrices of size $p \times p$. 

\begin{figure}[!t] \centering \def\scl{0.37}
\includegraphics[scale=\scl]{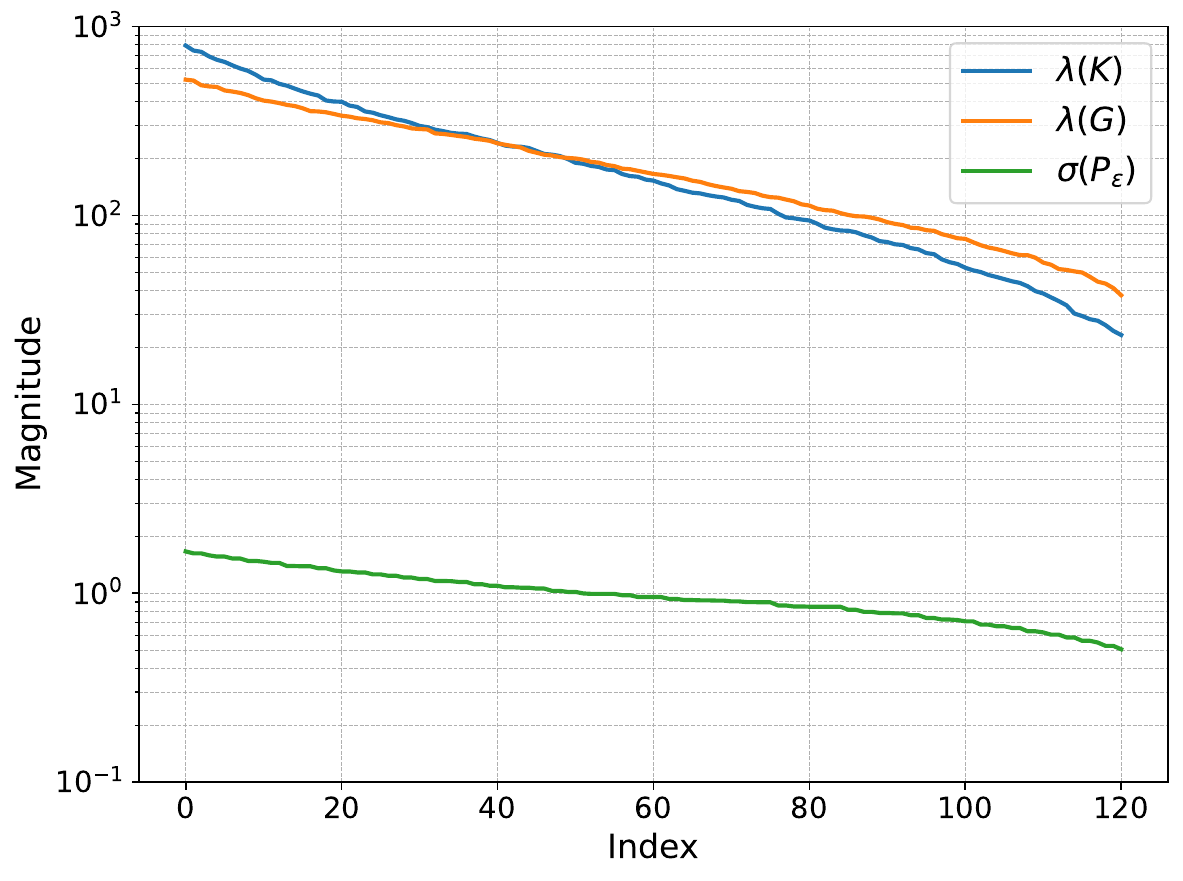} 
\includegraphics[scale=\scl]{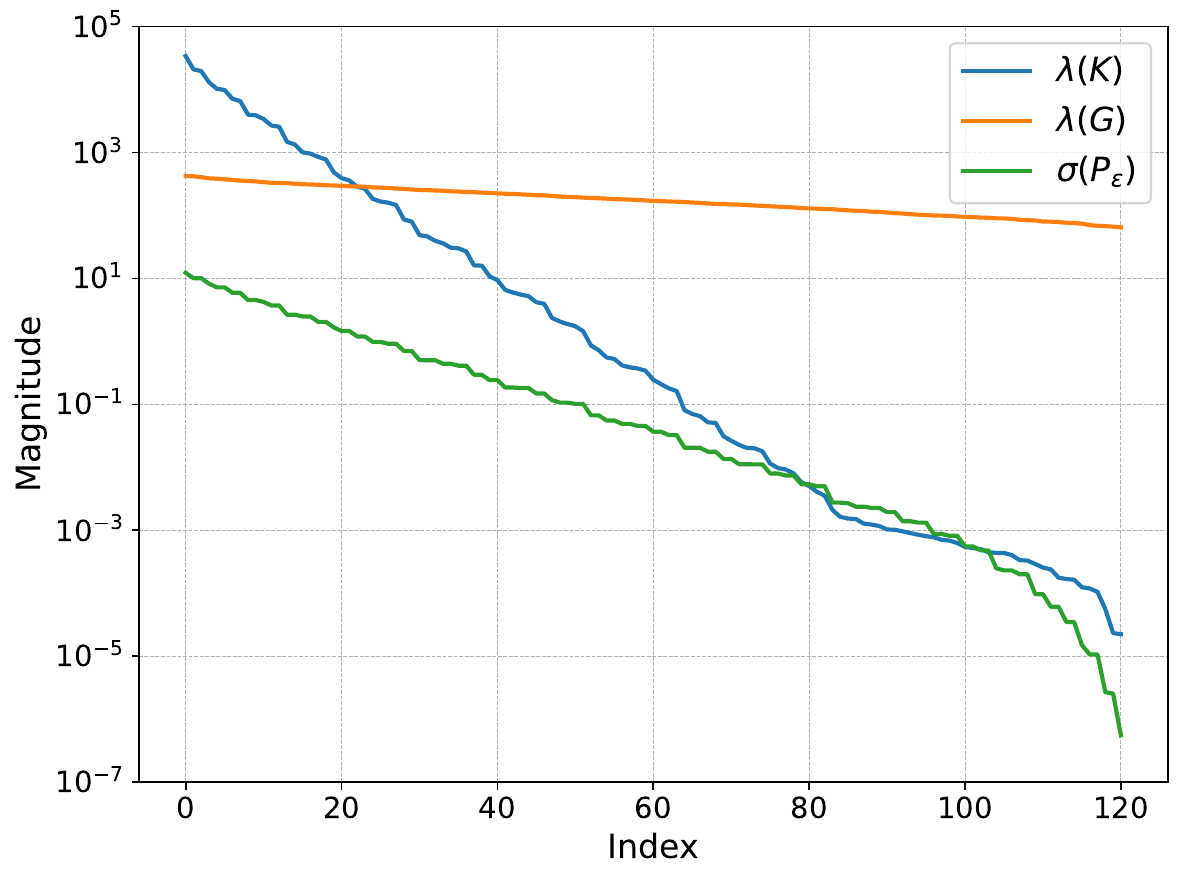} 
\includegraphics[scale=\scl]{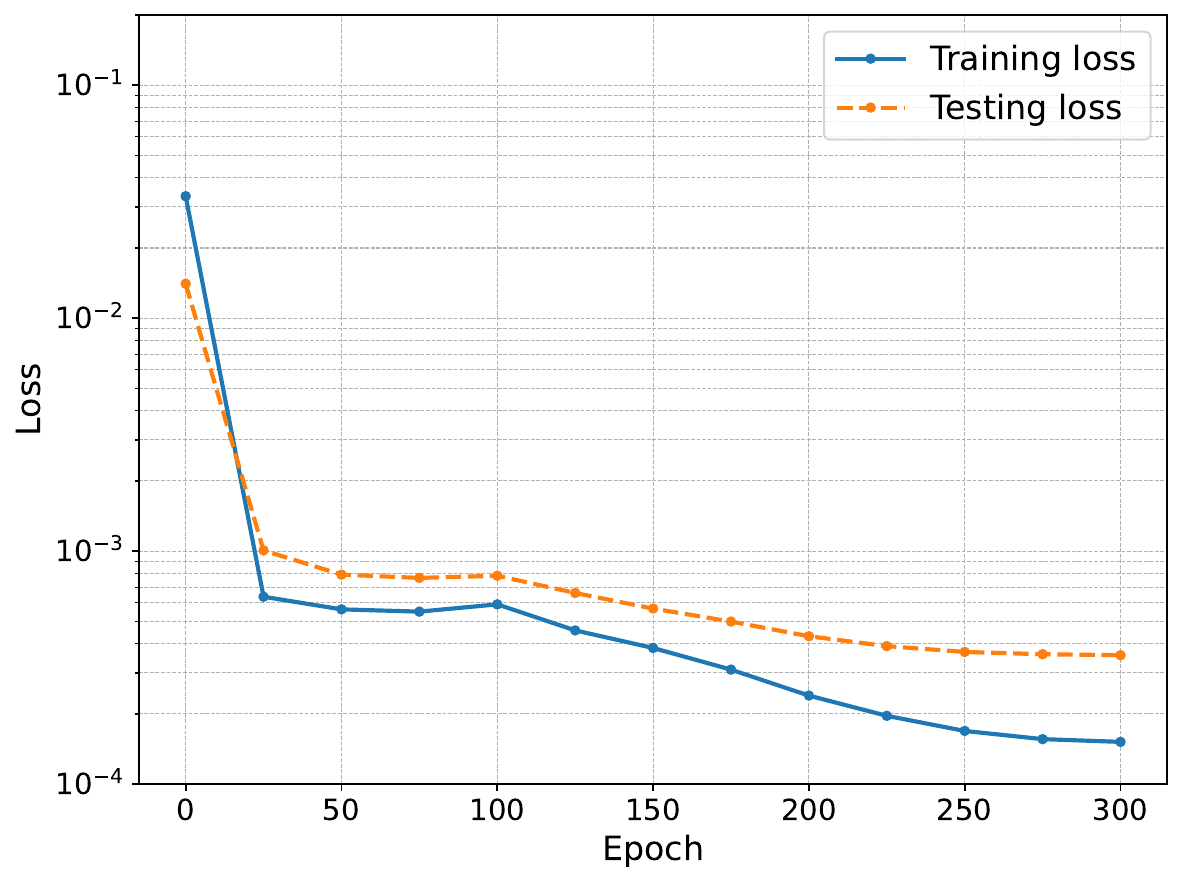} 
\includegraphics[scale=\scl]{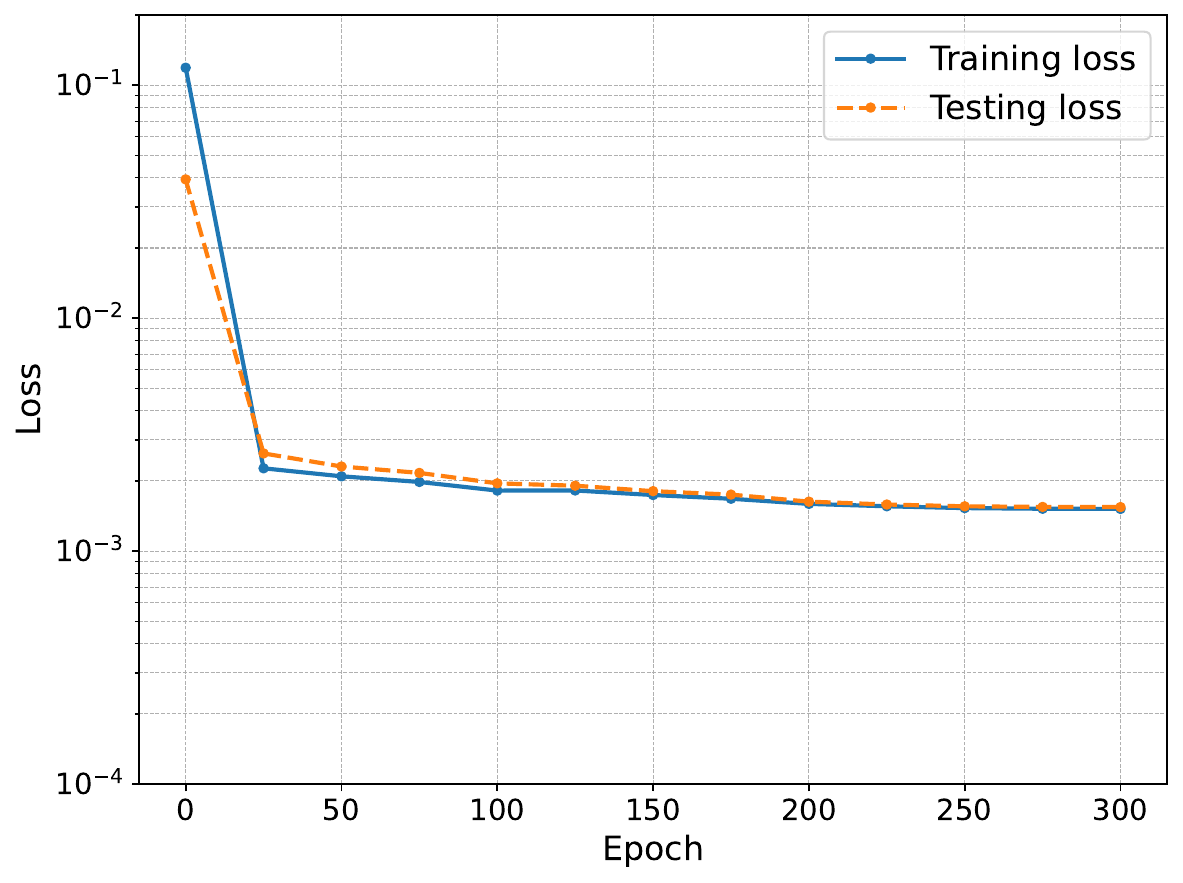} 
\caption{
NTK spectra ($K$, $G$, $P_{\epsilon}$) and training histories for $s=0.15$ (left) and $s=0.8$ (right).
For $s=0.15$, $P_{\epsilon}$ is well conditioned, yielding a well-behaved NTK spectrum and improved training.
For $s=0.8$, $P_{\epsilon}$ is highly ill conditioned, which deteriorates the spectrum of $K$ and hampers training.
}
\label{fig:NTK_s} 
\end{figure}

\paragraph{Influence of the overlap parameter $s$}
We first study the influence of the overlap parameter $s$. 
As shown in \cref{fig:NTK_s}, excessively small values of $s$ result in severe ill-conditioning of the kernel matrix $K$.
While the matrix $P_{\epsilon}$ is positive definite for all $\epsilon$ (see \cite{fornberg2015}),  it can easily be shown that $\kappa(P_{\epsilon})$, the condition number of $P_{\epsilon}$,
satisfies
$$
\kappa(P_{\epsilon}) \underset{\epsilon \to 0}{\longrightarrow} \infty
\quad \text{and} \quad
\kappa(P_{\epsilon}) \underset{\epsilon \to \infty}{\longrightarrow} 1.
$$
Thus, relying solely on NTK analysis, the theoretically optimal choice would be $\epsilon \to \infty$, i.e., $s \to 0$.
However, while very large values of $\epsilon$ favor training, it comes at the cost of poor generalization, 
since the corresponding Gaussians have little overlap and fail to cover the full domain $\Omega$. A practical stability condition can be obtained by requiring the sum of two neighboring Gaussians not to have a local minimum at the midpoint, which is equivalent to
$ s \geq e^{-2} \approx 0.135. $ In practice, we choose $s=0.15$.

\paragraph{About the positivity-enforcing function $ \sigma_+$}
We have observed that enforcing positivity by applying a positive function $\sigma_{+}$ to the output of the branch net leads to more stable results. Positivity is also a desirable property, as the network output is subsequently used as a regularization function.
Although NTK theory provides insight into how various parameters influence training dynamics, enforcing positivity in the branch network through the application of a positive function moves the training dynamics beyond the standard NTK regime. Nevertheless, we observe that parameter choices suggested by NTK analysis still yield near-optimal performance under positivity constraints. After testing several positive functions, we find that the square function produces the most stable results.

Based on extensive numerical experiments, we summarize in \cref{tab:optimal_DeepONet} the RBF-DeepONet configuration that achieves favorable training behavior while keeping the number of trainable parameters reasonable.

\begin{table}[t!]
\centering
\caption{RBF-DeepONet architecture and training hyperparameters selected for optimal performance.}
\label{tab:optimal_DeepONet}
\begin{tabular}{lc}
\toprule

\multicolumn{2}{l}{\textbf{Architecture}} \\
\midrule
Branch network depth & 1 hidden layer \\
Branch network width & $3p$ \\
Branch activation function & tanh \\
Positivity enforcement $\sigma_+$ & square function \\
Trunk basis functions & Gaussian radial basis functions \\
Overlap parameter & $s = 0.15$ \\

\midrule
\multicolumn{2}{l}{\textbf{Training}} \\
\midrule
Optimizer & Adam with weight decay $5 \times 10^{-5}$ \\
Learning rate & $10^{-3}$ with cosine annealing to $10^{-5}$ \\
Training iterations  & 300 epochs \\

\bottomrule
\end{tabular}
\end{table}

\section{Noise level prediction}
\label{section_noise}

This section addresses the estimation of the noise level $\delta$. 
An estimate of $\delta$ is essential, as it directly determines the amount of regularization: lower noise levels require less regularization. 
In practice, however, $\delta$ is unknown and must be inferred from the data.
We propose a neural-network-based estimator that takes as input the singular values of the far-field matrix. 
Using the full matrix as input proved inefficient, and we were unable to obtain satisfactory results with this approach. 
An additional advantage of relying on singular values is their invariance with respect to translations of the obstacle: due to the relation in \cref{invariance_relation}, the singular values of the far-field operator do not depend on the obstacle's position.

\subsection{Decay of singular values}
Our approach is based on the decay of the singular values of the far field matrix $F_{\delta}$. Considering a sound-soft circular obstacle with radius $R$, equation \cref{asympotics_dirichlet} provides the expected decay rate for the singular values of the far-field operator:
\begin{align*}
  \sigma_{p} \underset{p \rightarrow+\infty}{\sim}  \sqrt{\frac{8 \pi^3}{k}} \frac{1}{p!(p-1)!}\left( \frac{kR}{2} \right) ^{2p}.  
\end{align*}
Note that the singular values converge to $0$. This is a consequence of the compactness of the far-field operator (see definition \cref{ff_operator}), which is typical in the context of inverse scattering problems. At the discrete level, this property leads to ill-conditioned matrices. In the present case, the situation is really severe, as the eigenvalues decay super-exponentially.  However, the presence of noise perturbs the singular values (see eigenvalue perturbation theory~\cite{crandall1973,simon1982}), thereby altering the observed decay. In \cref{fig:eig}, we display the singular values of $F_{\delta}$ for different values of the noise parameter $\eta$. It corresponds to a sound-soft circular obstacle with radius $0.5$.

\begin{figure}[!t]
\centering
\def\scl{0.4}
\includegraphics[scale=\scl]{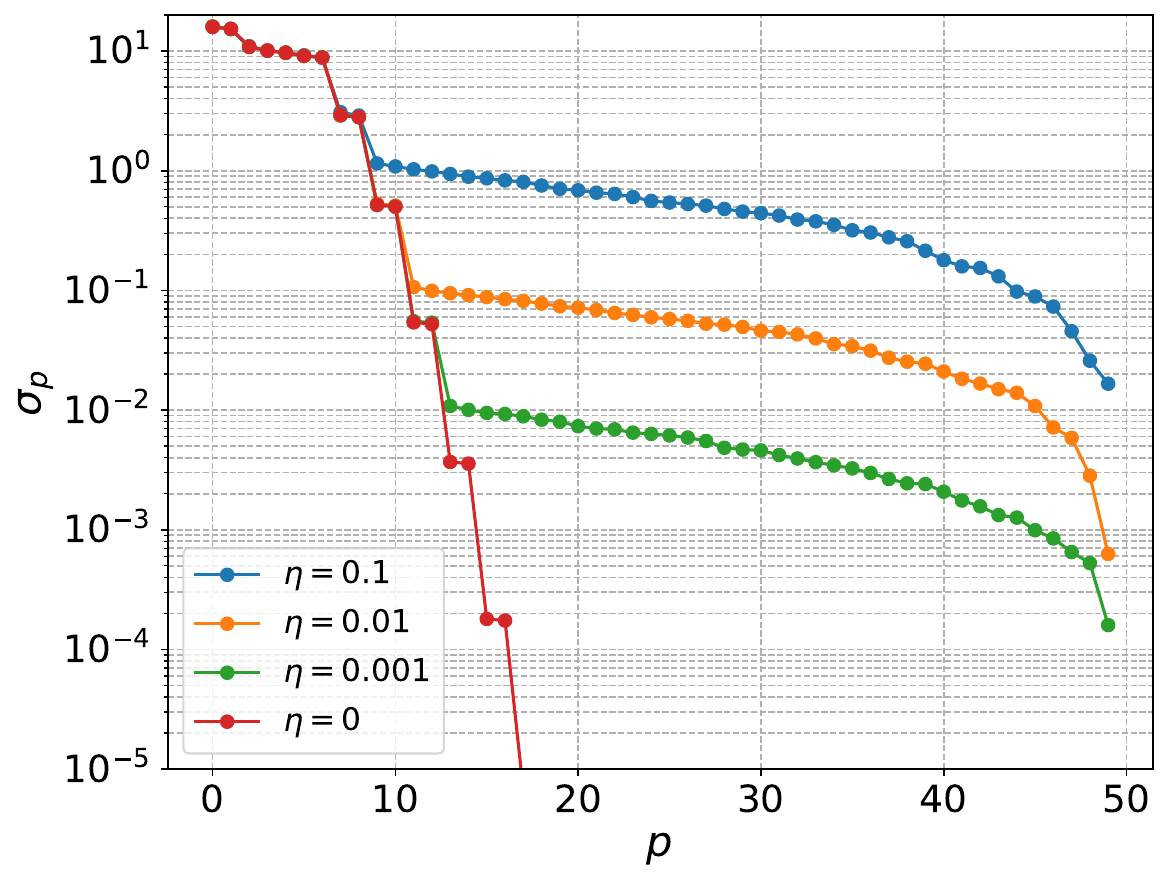} 
\caption{Eigenvalues of \(50 \times 50\) noisy far-field matrices for a circular obstacle of radius \(0.5\), computed with noise levels \(\eta = 0.1\) (blue), \(\eta = 0.01\) (orange), \(\eta = 0.001\) (green), and \(\eta = 0\) (red), where $\eta$ is defined in \cref{noising}.
}
\label{fig:eig}
\end{figure}
We observe that the singular values, especially the smaller ones, provide information about the noise level. In particular, the presence of noise prevents the singular values from converging toward zero, as they appear to converge to a noise-dependent plateau. This behavior may be related to the Marchenko--Pastur distribution, which describes the asymptotic singular value distribution of random matrices, although we have not further investigated this point.

\subsection{Neural network approach}
\label{network_noise}

We investigate the use of neural networks to predict the noise level 
\[
\delta = \|F - F_\delta\|_2,
\] 
from the singular values of the far-field matrix. 
Similarly to what was described in \cref{RBF_DeepONet}, given a noisy far-field matrix $F_\delta \in \mathbb{C}^{m \times n}$, 
we first interpolate it onto a prescribed size $m_0 \times n_0$ using Fourier interpolation, 
yielding $\tilde{F}_\delta \in \mathbb{C}^{m_0 \times n_0}$. 
The singular values of $\tilde{F}_\delta$ are then used to predict the noise level using a neural network.

\paragraph{Training data} 
For training, we generate exclusively $m_0 \times n_0$ far-field matrices. 
Training noise amplitude $\eta$ are sampled from a log-uniform distribution over $[5\times 10^{-3}, 3\times 10^{-1}]$. 
For each $\eta$, a random obstacle radius $r \in [\lambda/2, 1.5\lambda]$ is selected, and the corresponding $n_0 \times m_0$ noisy far-field matrix for a circle of radius $r$ is constructed.
As network features, we use the logarithms of its singular values: 
\[
\left(\log \sigma_1, \log \sigma_2, \dots, \log \sigma_{n_0}\right)^\top \in \mathbb{R}^{n}.
\]
The logarithm is used to improve scaling properties.
The network labels are defined as
\[
\log\left(\frac{\delta}{\sqrt{m_0} + \sqrt{n_0}}\right),
\]
where $\delta$ denotes the noise level. Once again, the logarithm ensures appropriate scaling, while the factor $1/(\sqrt{m_0} + \sqrt{n_0})$ is motivated by random matrix theory: for a matrix with independent, zero-mean entries, the spectral norm typically scales like $\sqrt{m} + \sqrt{n}$ (see \cite[Thm.~4.4.5]{vershynin2018}).

\paragraph{Evaluation} 
Given a noisy far-field matrix $F_\delta$ of shape $m_0 \times n_0$, we first reshape it into $\tilde{F}_\delta \in \mathbb{C}^{m \times n}$ using Fourier interpolation. 
The network input is the logarithm of the singular values of $\tilde{F}_\delta$. 
The predicted noise level $\delta_\theta$ is obtained by taking the exponential of the network output and multiplying by $\sqrt{m_0} + \sqrt{n_0}$ to account for the original matrix size.

\paragraph{Choice of the network}
For the noise-level estimator, we employ a standard feed-forward neural network (MLP). 
Despite their simplicity, MLPs remain among the most widely used and versatile models in machine learning. 
The data generation and training procedure is summarized in \cref{training_info_noise}.

\begin{table}[t!]
\centering
\caption{Training configuration and noise prediction for the network.}
\label{training_info_noise}
\begin{tabular}{lc}
\toprule
\multicolumn{2}{l}{\textbf{Training data}} \\
\midrule
Number of samples & 400 noisy far-field matrices \\
Obstacle type & sound-soft circle centered at the origin, radius $r \in [\lambda/2,\, 1.5\lambda]$ \\
Matrix size & $m_0 \times n_0$ (fixed) \\
Input features & $\bigl(\log \sigma_1, \log \sigma_2, \dots, \log \sigma_{n_0} \bigr)$, singular values of $F_\delta$ \\
Output labels & $\log \left( \dfrac{\delta}{\sqrt{m_0} + \sqrt{n_0}} \right)$, with $\delta = \|F - F_\delta\|_2$ \\
Noising process &  Gaussian noise: $F_\delta = F \cdot (1 + \eta X)$, $\eta \sim \mathrm{LogUniform}[5\times10^{-3},\, 3\times 10^{-1}]$ \\

\midrule
\multicolumn{2}{l}{\textbf{Testing data}} \\
\midrule
Number of samples & 50 per test obstacle \\
Matrix size & $m \times n$ (possibly different from $m_0 \times n_0$) \\
Preprocessing & Fourier interpolation to $m_0 \times n_0$ before feature extraction \\
Network input & $\bigl(\log \sigma_1, \dots, \log \sigma_{n_0} \bigr)$ of interpolated matrix \\
Noise prediction & $\delta_\theta = (\sqrt{m} + \sqrt{n}) \exp(\text{network output})$  \\

\midrule
\multicolumn{2}{l}{\textbf{Network configuration}} \\
\midrule
Network type & MLP \\
Activation function & ReLU \\
Architecture & $(n_0,\,100,\,1)$ \\
Optimizer & Adam with weight decay $10^{-4}$ \\
Learning rate & $5 \times 10^{-3}$ \\
Training iterations & 300 epochs \\
\bottomrule
\end{tabular}
\end{table}

\paragraph{Numerical results}
The performance of the network is evaluated on far-field matrices of varying sizes and corresponding to obstacles of different shapes. 
Representative predictions are displayed in \cref{fig_noises}, while quantitative results, reported in terms of the mean relative error $|\delta - \delta_\theta|/|\delta|$, are given in \cref{tab_noise_errors}. 
In all computations, we fix $m_0 = n_0 = 30$, i.e., each far-field matrix is interpolated to a standardized $30 \times 30$ resolution.

Across all test cases, the mean relative error is typically below $20\%$. 
This accuracy is sufficient for our purposes, since the estimated noise level is only used to guide regularization and parameter selection; capturing the correct order of magnitude is enough. 
The network provides stable estimates across different obstacle geometries and matrix sizes, and similar performance is observed when Gaussian noise is replaced by uniform noise. We therefore expect the method to extend to other zero-mean noise distributions.

The approach nevertheless has limitations. 
First, generalization outside the training noise range is poor, as expected for a purely data-driven method, which highlights the importance of selecting an appropriate training interval. 
Second, accurate prediction requires sufficiently many singular values. For larger obstacles, the decay is slower and the noise-dominated regime appears at higher indices; insufficient resolution may therefore obscure the noise plateau. In particular, interpolating to a $30 \times 30$ matrix is inadequate for obstacles larger than $3\lambda$.

\begin{figure}[!t]
\centering
\def\scl{0.315}

\begin{tabular}{cc}
\textbf{Circle} & \textbf{Kite} \\[2mm]

\includegraphics[scale=\scl]{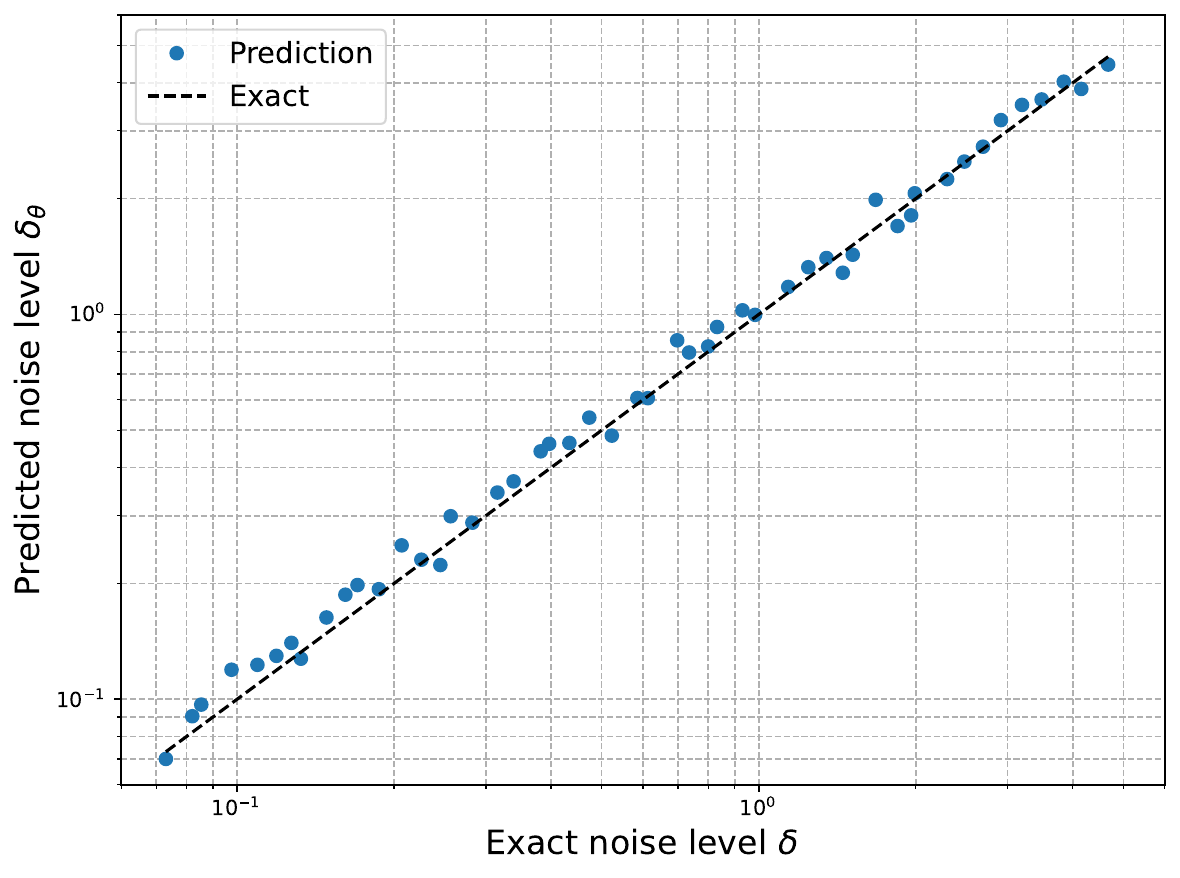} &
\includegraphics[scale=\scl]{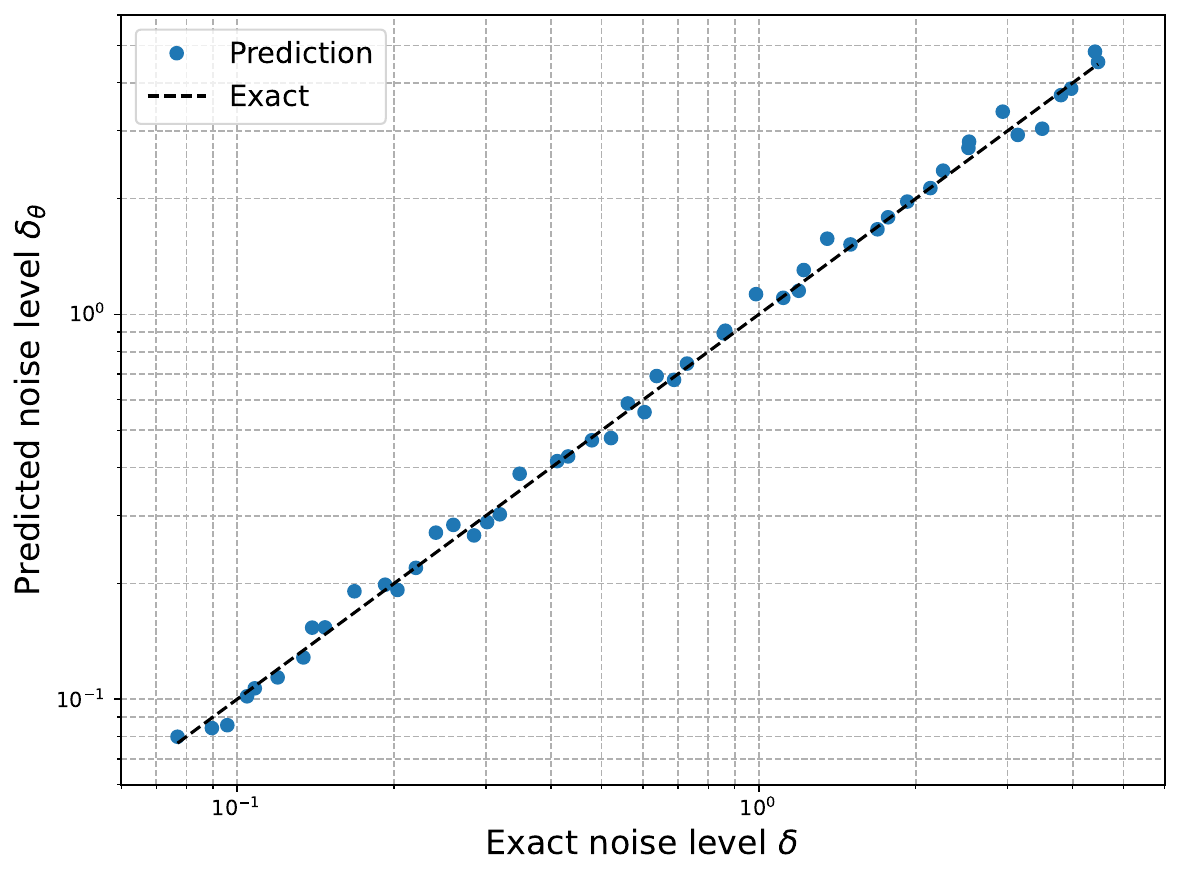} \\

\small $(60 \times 30)$ & \small $(60 \times 30)$ \\[3mm]

\includegraphics[scale=\scl]{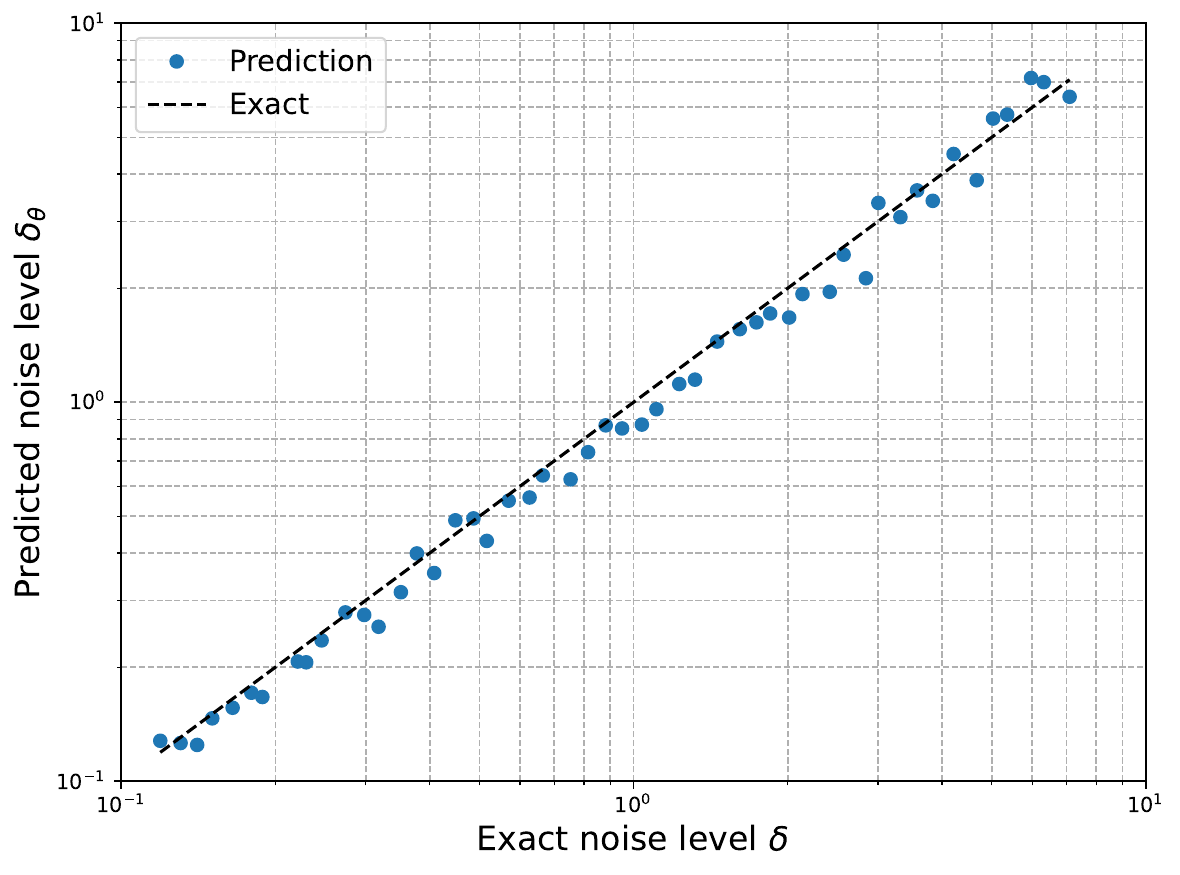} &
\includegraphics[scale=\scl]{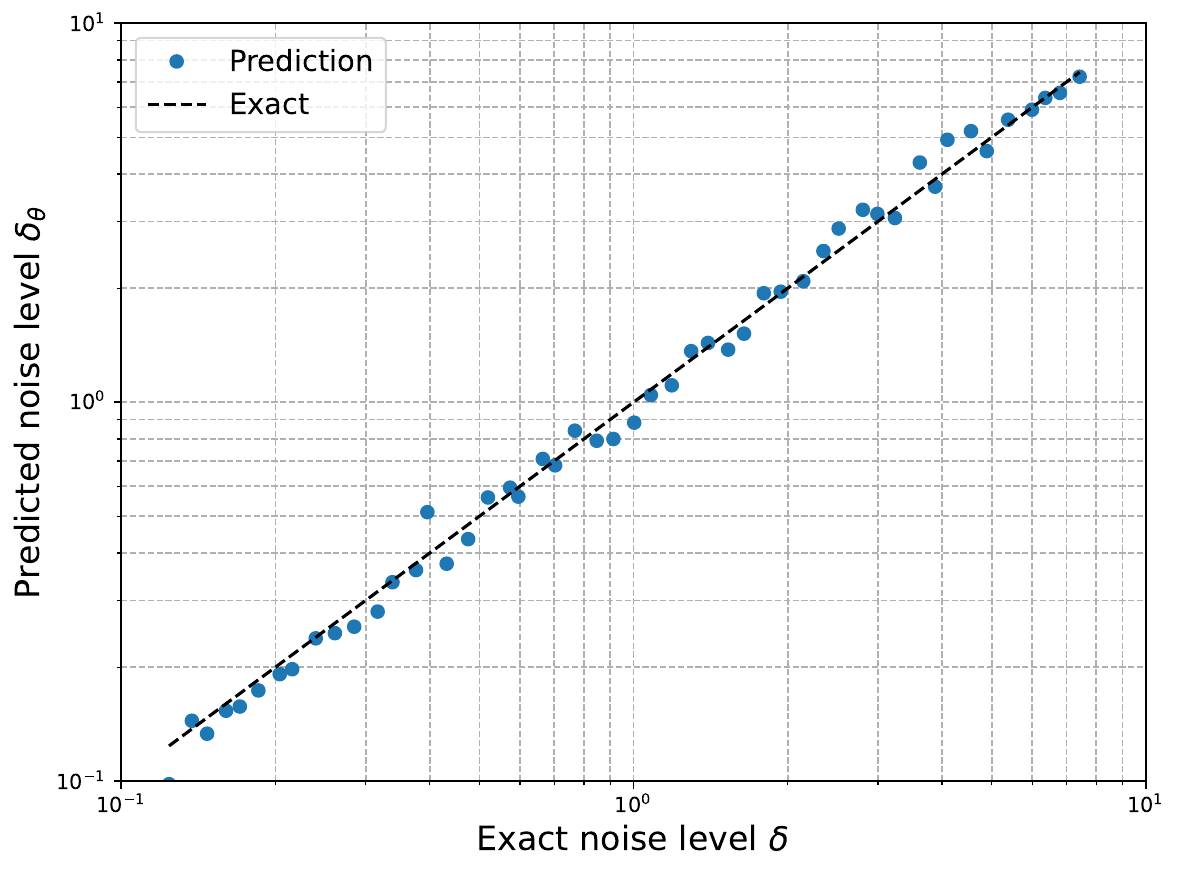} \\

\small $(100 \times 100)$ & \small $(100 \times 100)$
\end{tabular}

\caption{Exact versus predicted noise levels for different obstacle geometries and far-field matrix sizes.
Left column: circle wirth radius $r=1$.
Right column: kite.
Top row: initial far-field matrix of size $60 \times 30$.
Bottom row: initial far-field matrix of size $100 \times 100$. Predictions align well with exact noise level overall.}
\label{fig_noises}
\end{figure}

\begin{table}[t!]
\centering
\caption{Mean relative errors for different obstacle geometries and matrix sizes (typically $\approx 10\%$).}
\label{tab_noise_errors}
\begin{tabular}{lcc}
\toprule
\textbf{Matrix size} & \textbf{Obstacle} & \textbf{Mean relative error} \\
\midrule
\multirow{3}{*}{$100 \times 100$}
 & Circle ($r=0.60$) & $4.09 \times 10^{-2}$ \\
 & Circle ($r=1.30$) & $1.33 \times 10^{-1}$ \\
 & Kite ($r=0.80$)   & $1.21 \times 10^{-1}$ \\
\midrule
\multirow{3}{*}{$50 \times 50$}
 & Circle ($r=0.60$) & $6.99 \times 10^{-2}$ \\
 & Circle ($r=1.30$) & $1.34 \times 10^{-1}$ \\
 & Kite ($r=0.80$)   & $1.06 \times 10^{-1}$ \\
\bottomrule
\end{tabular}
\end{table}


\section{Regularizing the LSM}
\label{section_reg}

In this section, we seek to provide a practical strategy for selecting the regularization coefficients in the LSM, as an alternative to the computationally expensive Morozov's principle.

\subsection{Proposed approach}

A straightforward option is to use a single regularization parameter $\alpha$, independent of both the noise level $\delta$ and the sampling point $\bs{z}$. 
For instance, \cite{catapano2007} suggest choosing $\alpha = \|F\|_2 / 100$. 
Although this heuristic can yield satisfactory reconstructions across various obstacle types and noise levels, we observed that it may lead to suboptimal results in certain cases, particularly when the noise level is very low or when the obstacle is large. 
We therefore aim to design a more practical heuristic that explicitly accounts for both the noise level and the sampling point.

A natural guiding principle is that the regularization coefficient should decrease as the noise level decreases. Moreover, when inspecting the regularization maps obtained from the Morozov's principle (see \cref{fig_morozov}), we observe that the corresponding coefficients tend to be larger inside the obstacle. Motivated by this, we propose to exploit the indicator function predicted by the DeepONet to define a spatially varying regularization function. This indicator encodes spatial information and only requires appropriate scaling with respect to the noise level to serve as a suitable regularization function within the LSM. Thus, we propose choosing the regularization function as 
\[
\alpha_\theta[F_\delta](z) = \delta_\theta[F_\delta]\,
I_\theta[F_\delta](z),
\]
where  $F_{\delta}$ is the noisy far-field matrix, $z$ is the sampling point and $\delta_{\theta}$ is the predicted noise level, as described in \cref{section_noise}.
\subsection{Numerical results}
We present some numerical results. In particular, we compare the initial indication function $I_{\theta}$ obtained using the RBF-DeepONet and its associated LSM indicator functions $I$, using $\alpha_{\theta}$ as regularization function. For reference, we also display $I_{\mathrm{Mor}}$, the standard LSM indicator regularized with Morozov's principle.
\paragraph{Setup}
We consider initial matrices with shape $n_0\times n_0= 50 \times 50 $ interpolated into shape $n\times n= 30 \times 30 $.
We consider a full-aperture configuration, where sources and sensors are uniformly distributed around the entire obstacle. The experimental setup follows the description in \cref{LSM_setup}. We plot the initial neural network indicator $I_{\theta}(\bs{z})$, the associated LSM indicator $I(\bs{z})$ and the Morozov indicator $I_{\mathrm{Mor}}(\bs{z})$. 

\paragraph{Influence of the noise level}
We begin by analyzing the impact of noise on the reconstruction (see \cref{fig:full_noise_comparison}). The results indicate that the Morozov indicator is sensitive to the noise level, whereas the network-based indicators and the corresponding LSM indicator remain largely unaffected.
\begin{figure}[!t]
    \def\scl{0.32}
    $$
\begin{array}{ccc}
   I_{\theta}(\bs{z})  \text{: DeepONet } &   I(\bs{z}) \text{: LSM with DeepONet }     & I_{\mathrm{Mor}}(\bs{z}) \text{: LSM with Morozov}  \\
   
   \includegraphics[scale=\scl]{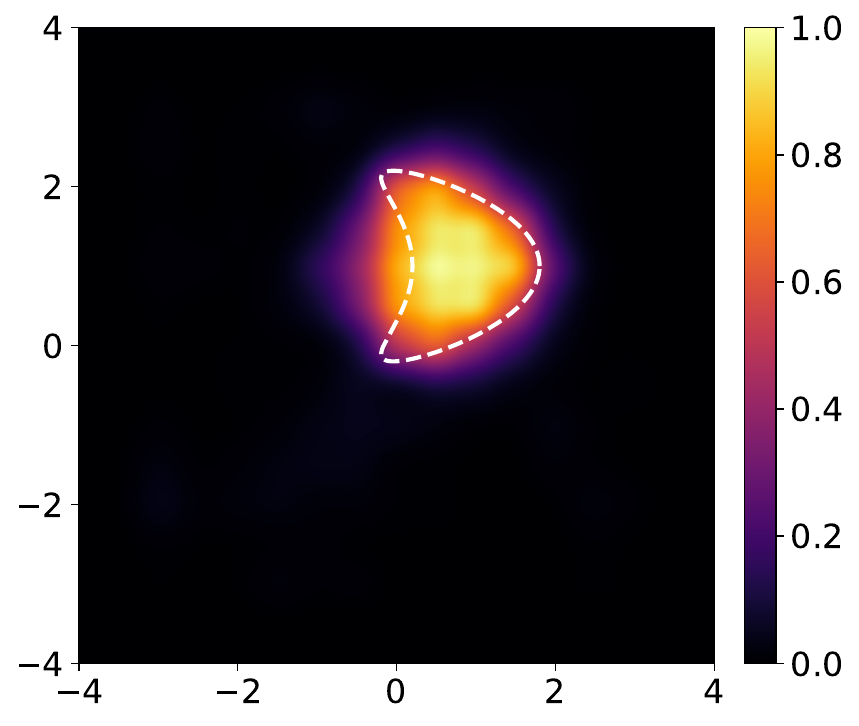}&
 \includegraphics[scale=\scl]{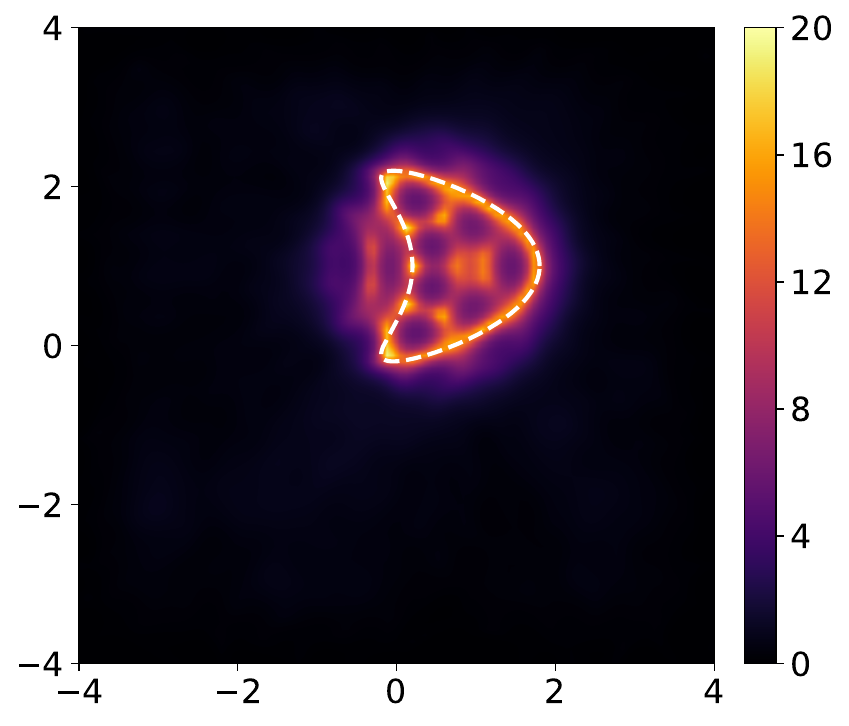} &
 \includegraphics[scale=\scl]{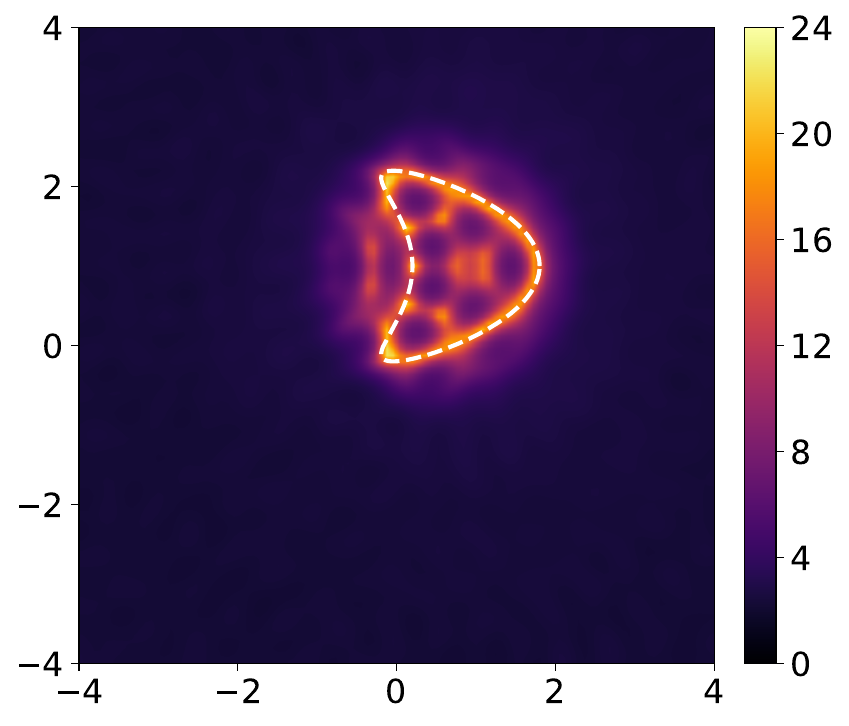} \\

  \includegraphics[scale=\scl]{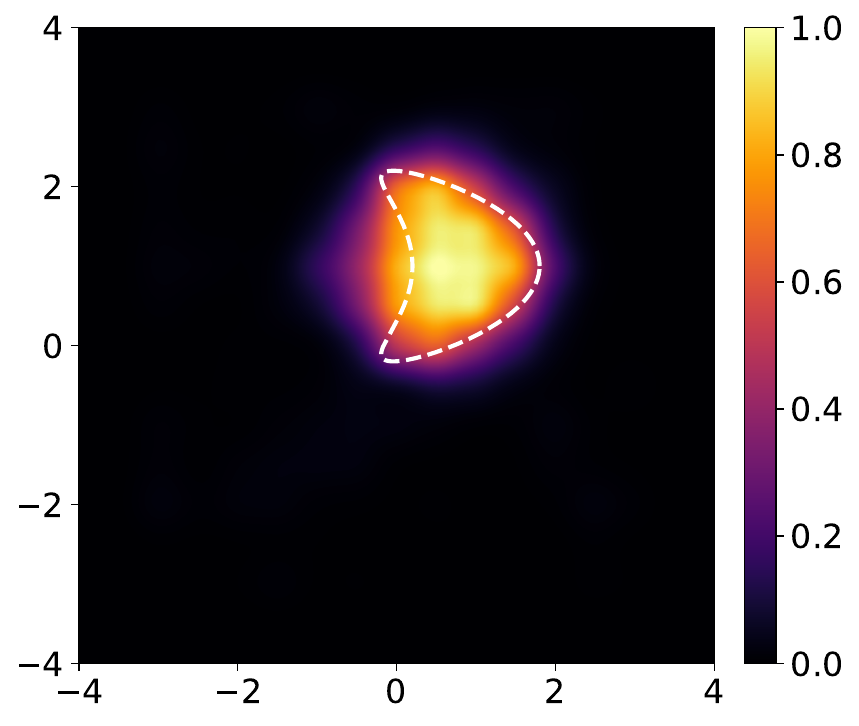}&
 \includegraphics[scale=\scl]{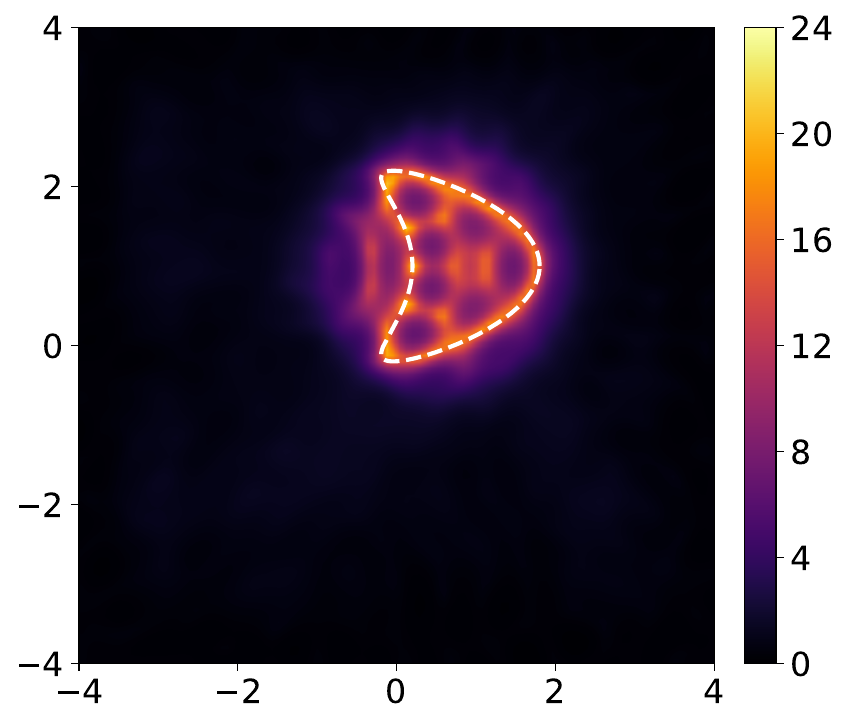} &
\includegraphics[scale=\scl]{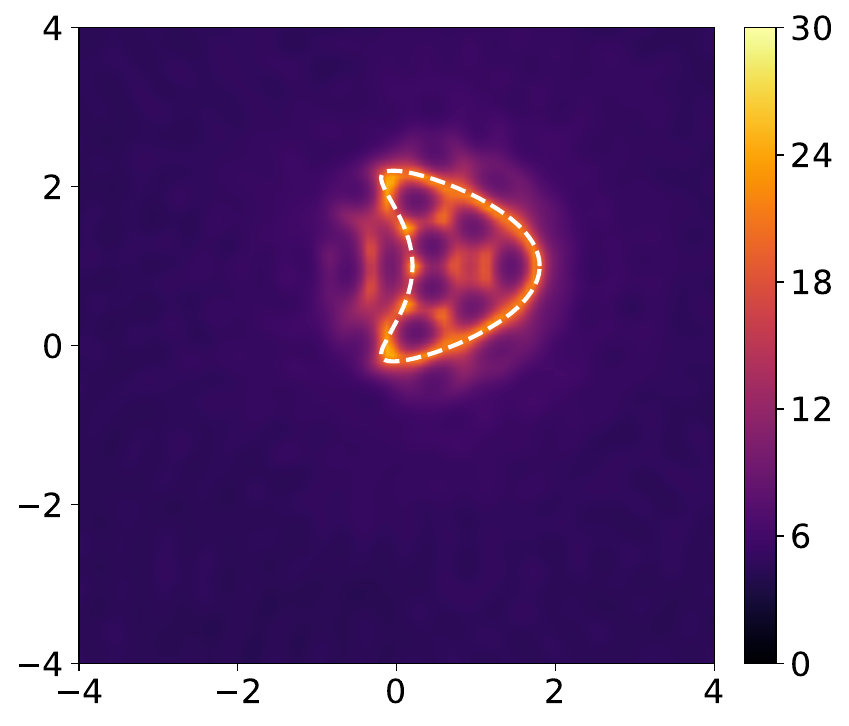} \\

  \includegraphics[scale=\scl]{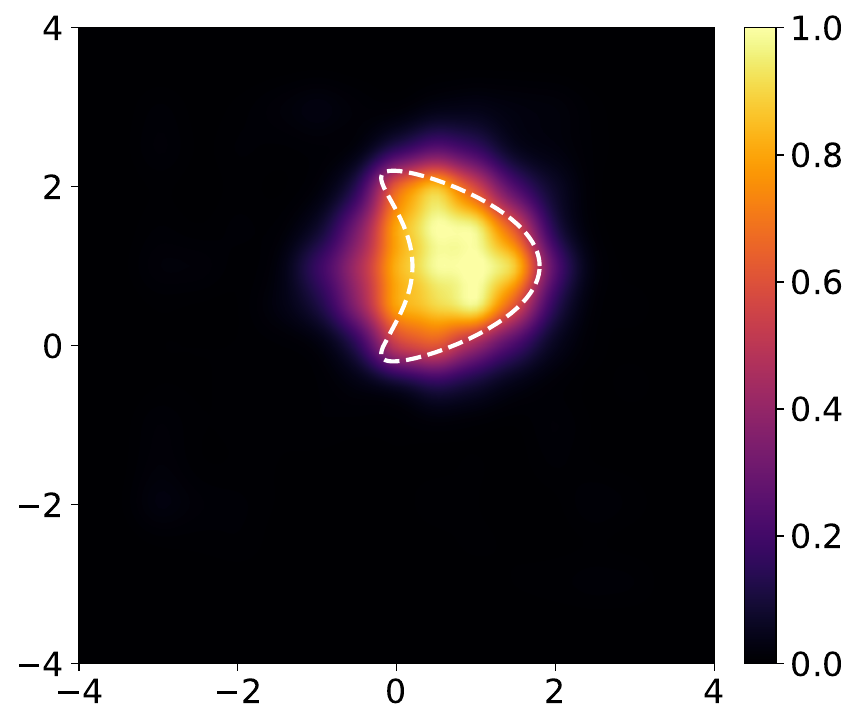}&
 \includegraphics[scale=\scl]{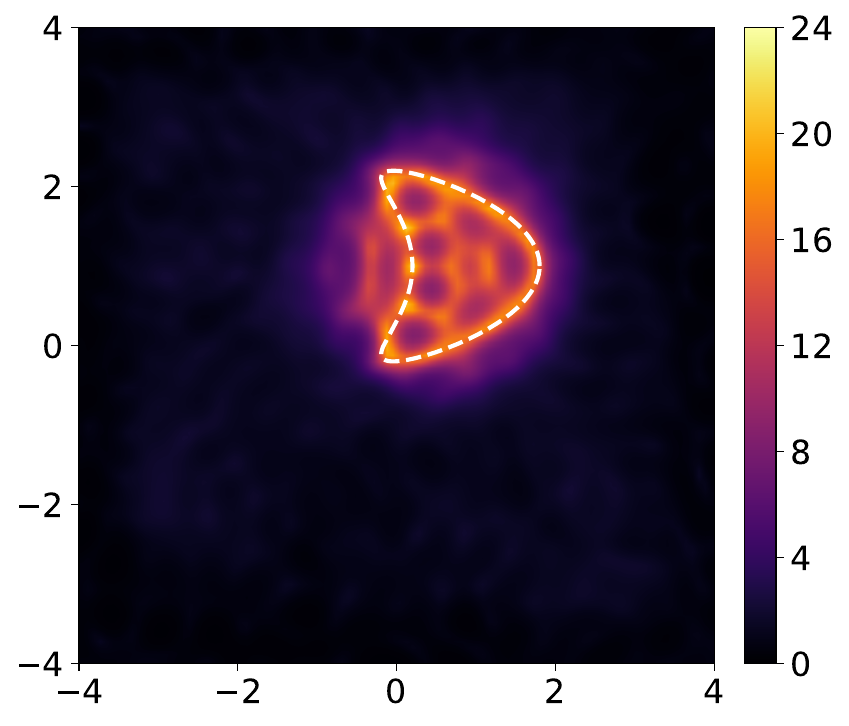} &
 \includegraphics[scale=\scl]{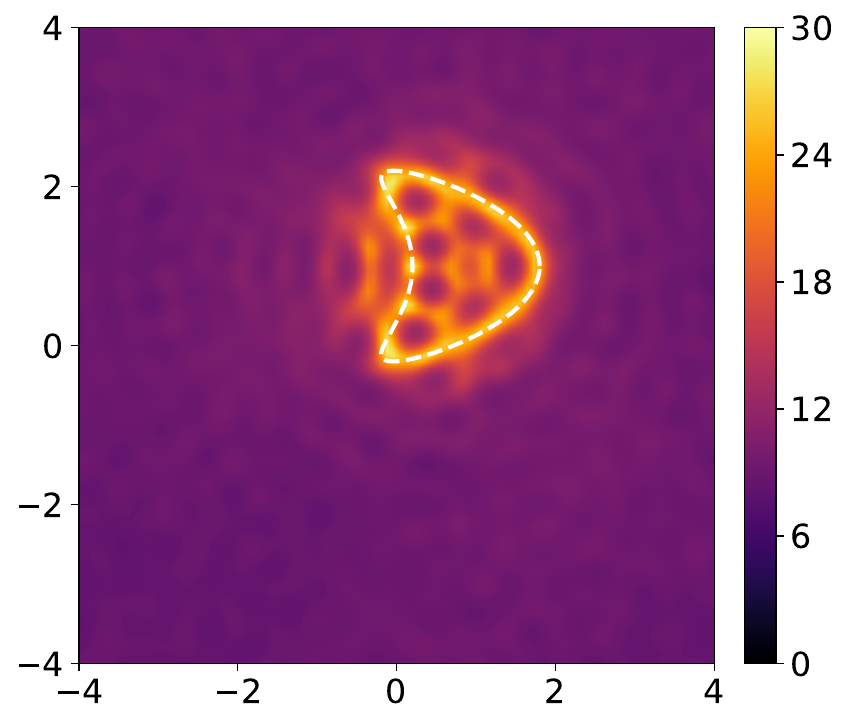} \\
\end{array}  $$
\caption{ Comparison of the DeepONet-based indicator (left column), the corresponding LSM indicator (middle column) and the Morozov LSM (right column) for a kite in full aperture. 
The noise parameter is set respectively to $\eta = 0.05$ (first row), $\eta = 0.1$ (second row) and $\eta = 0.2$ (fourth row). The DeepONet output seems minimally affected by the noise and the associated LSM indicator provides better contrast compared to Morozov LSM, especially for higher noise levels.
  }
\label{fig:full_noise_comparison}
\end{figure}

\paragraph{Single scatterers}
\Cref{fig:indicators_single} presents several reconstructions for different scatterer shapes, located at different positions. The DeepONet successfully retrieves the location and size of the defects. The corresponding LSM indicator enhances the reconstruction quality by refining the scatterer geometry and yielding higher contrast than the Morozov indicator.
\begin{figure}[!t]
    \def\scl{0.32}
    $$
\begin{array}{ccc}
   I_{\theta}(\bs{z})  \text{: DeepONet } &   I(\bs{z}) \text{: LSM with DeepONet }     & I_{\mathrm{Mor}}(\bs{z}) \text{: LSM with Morozov}  \\
   
   \includegraphics[scale=\scl]{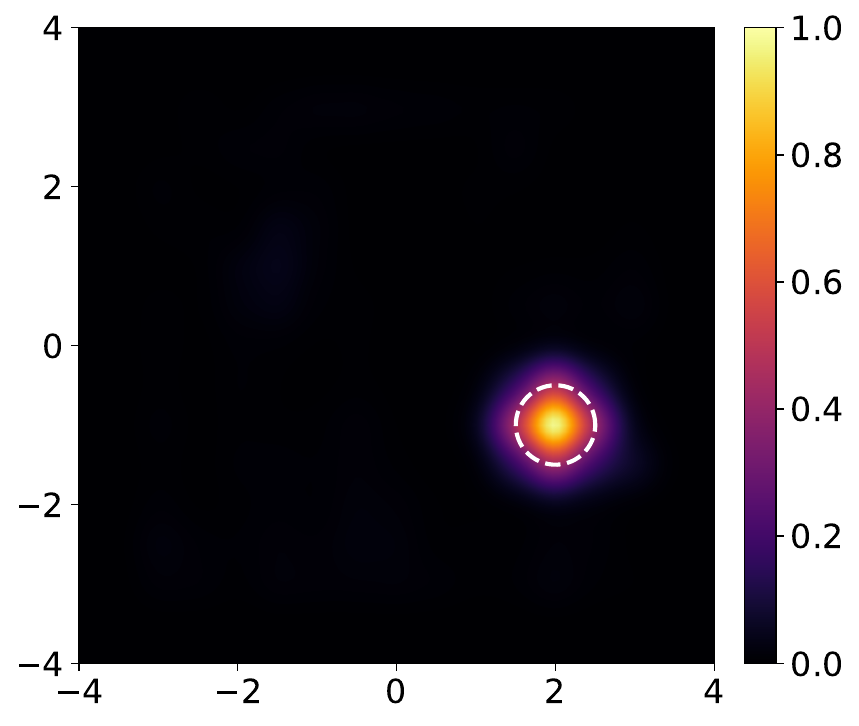}&
 \includegraphics[scale=\scl]{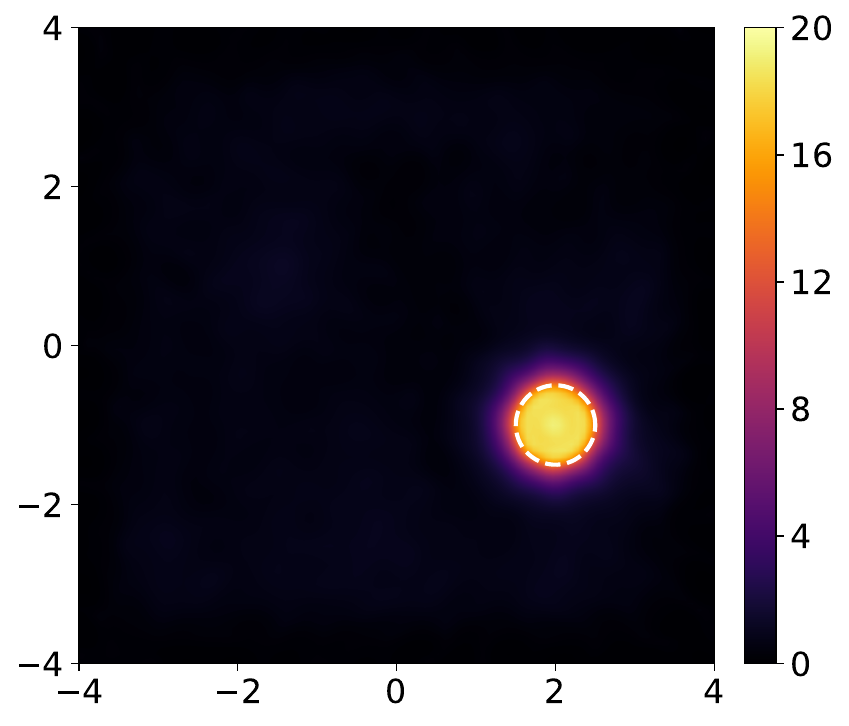}&
 \includegraphics[scale=\scl]{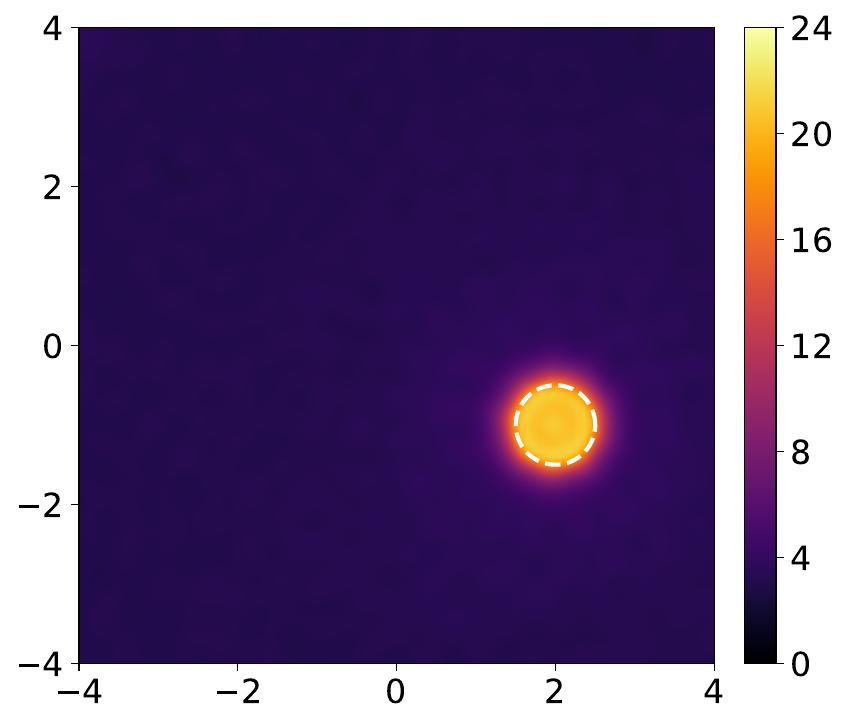} \\

   \includegraphics[scale=\scl]{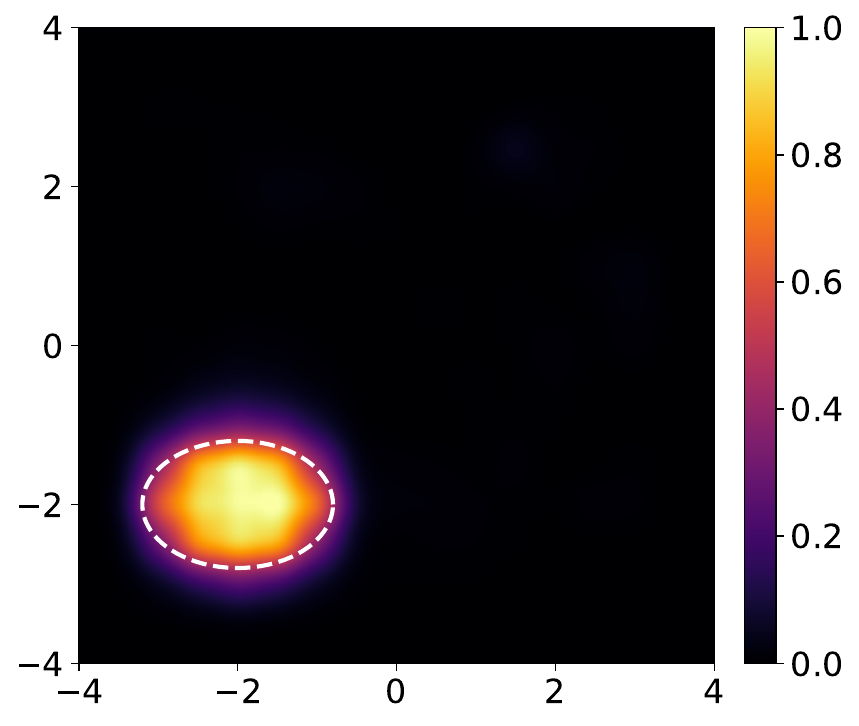}&
 \includegraphics[scale=\scl]{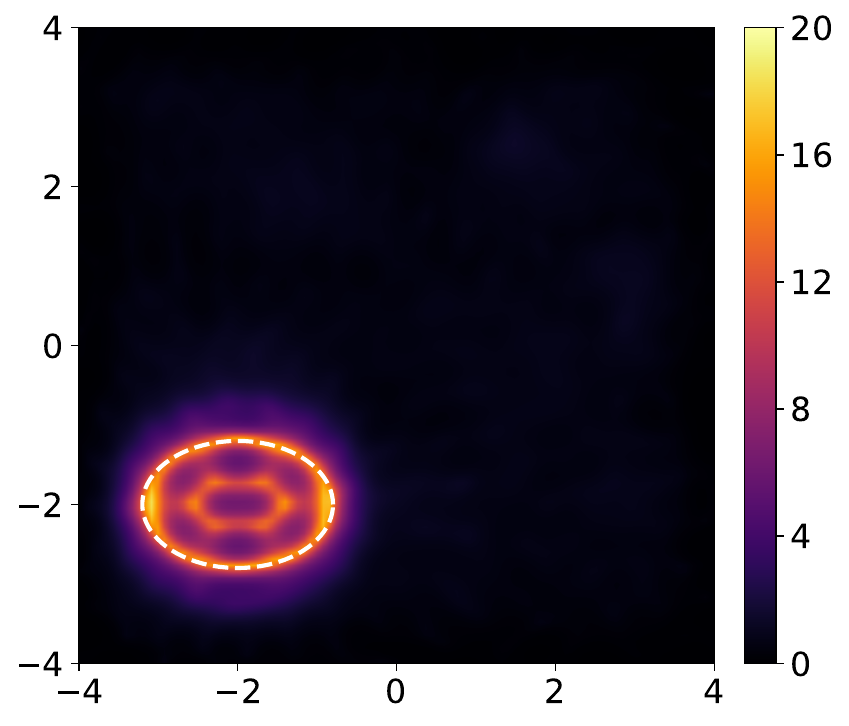}&
 \includegraphics[scale=\scl]{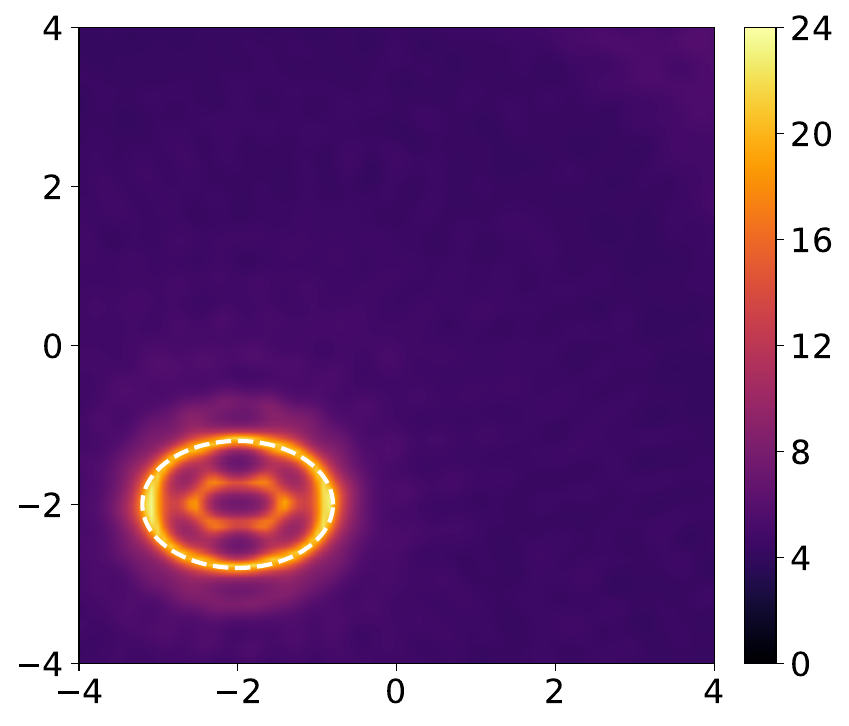} \\

   \includegraphics[scale=\scl]{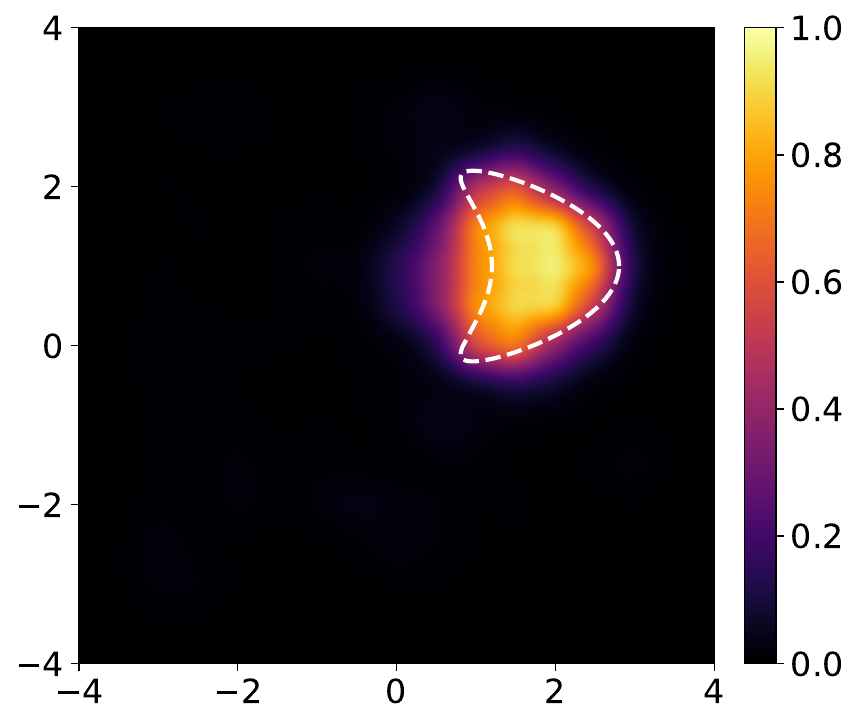}&
 \includegraphics[scale=\scl]{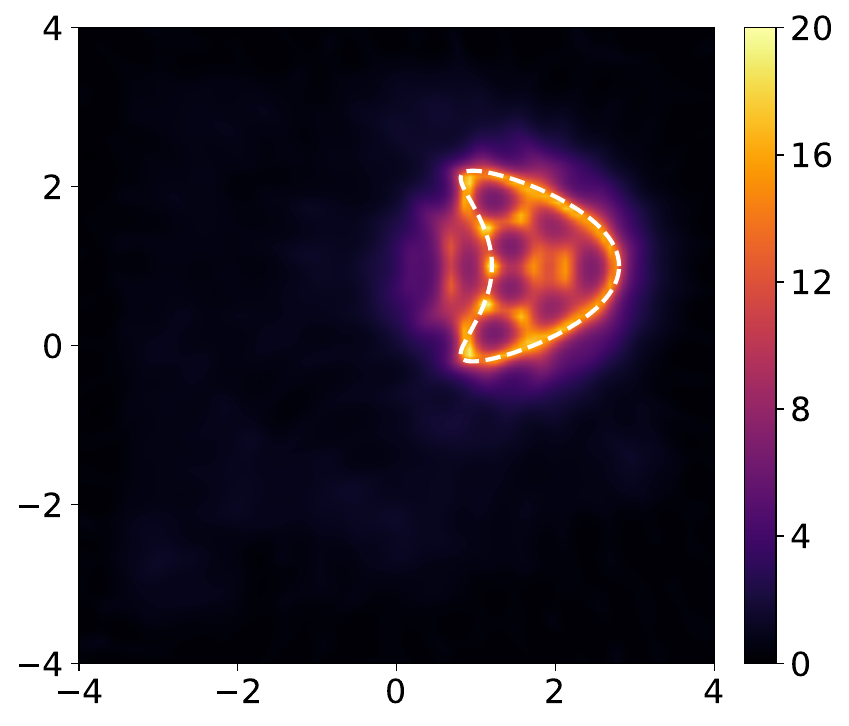}&
 \includegraphics[scale=\scl]{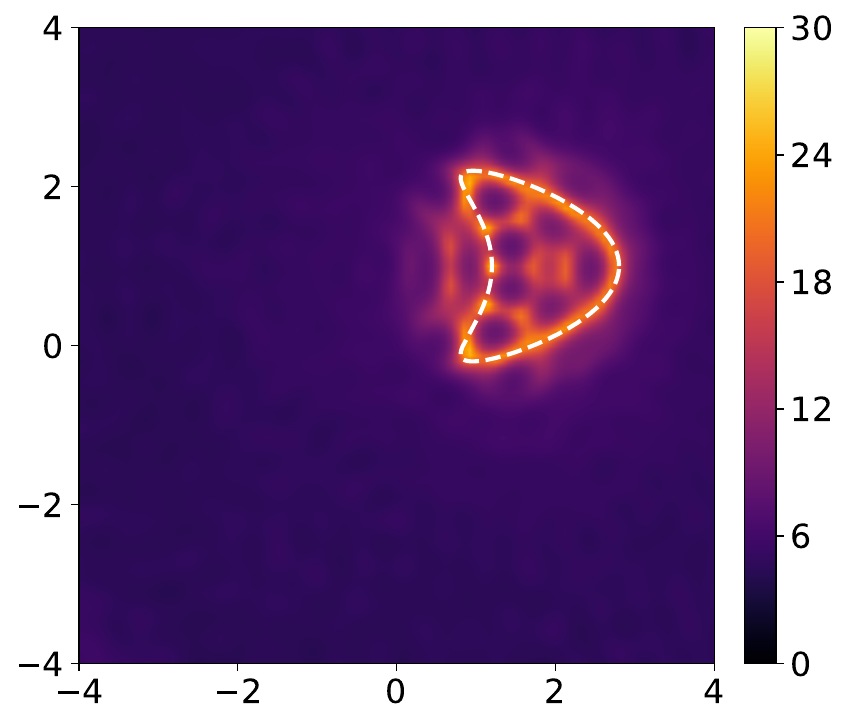} \\

\end{array}  $$
\caption{Reconstructions are shown for a cirle (first row), an ellipse (second row), and a kite (third row). The noise level is fixed to $\eta = 0.1$. The DeepONet accurately recovers both the position and shape of the scatterers. When combined with the LSM, the resulting indicator further improves the geometric reconstruction. Compared to the Morozov-based LSM indicator, the DeepONet-informed LSM yields slightly improved contrast.
}
\label{fig:indicators_single}
\end{figure}

\paragraph{Two scatterers}
We now consider configurations involving two scatterers, which lie outside the training regime, since the network was trained exclusively on single obstacles. Representative results are shown in \cref{fig:indicators_double}. The DeepONet indicators $I_{\theta}(\bs{z})$ are less accurate and contain some noise compared to the single-obstacle case. Nevertheless, the network is able to distinguish the presence of two scatterers and to predict their approximate positions and global sizes. The associated LSM indicator $I(\bs{z})$ significantly improves the reconstruction, yielding results that are comparable to those obtained with the standard Morozov-based LSM indicator $I_{\mathrm{Mor}}(\bs{z})$.

\begin{figure}[!t]
    \def\scl{0.32}
    $$
\begin{array}{ccc}
   I_{\theta}(\bs{z})  \text{: DeepONet } &   I(\bs{z}) \text{: LSM with DeepONet }     & I_{\mathrm{Mor}}(\bs{z}) \text{: LSM with Morozov}  \\
   
   \includegraphics[scale=\scl]{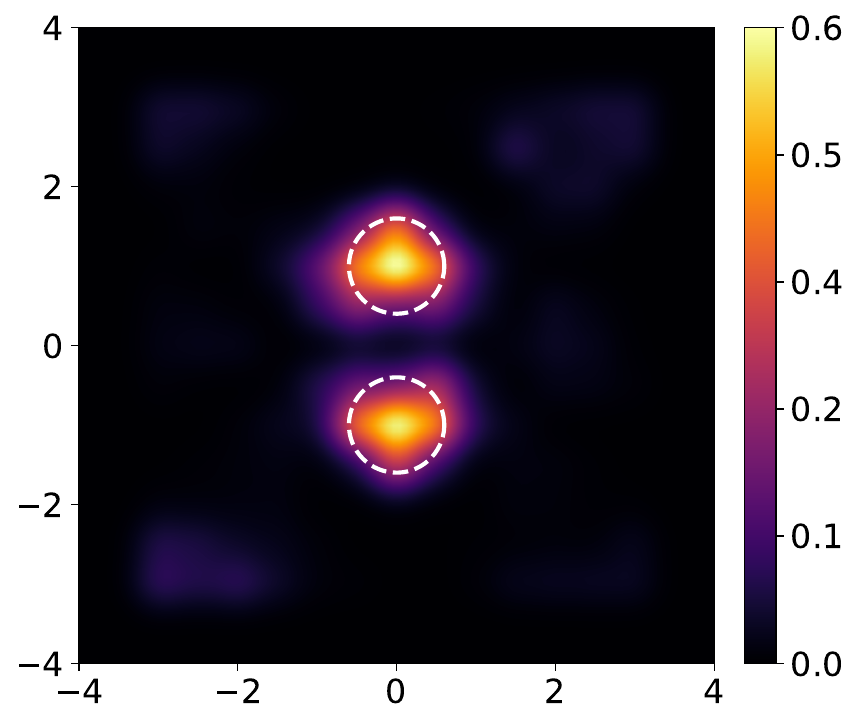}&
 \includegraphics[scale=\scl]{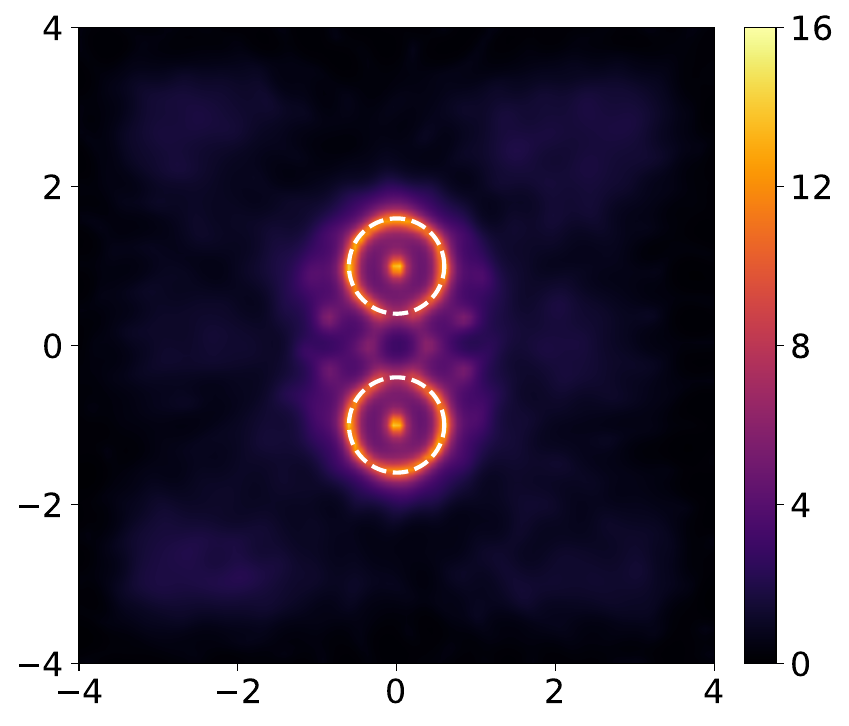}&
 \includegraphics[scale=\scl]{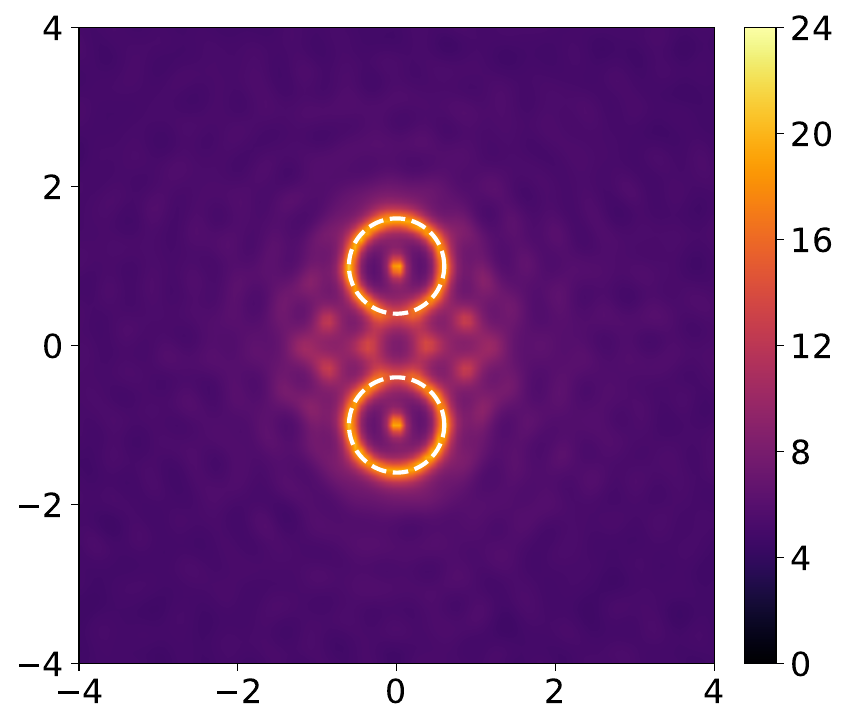} \\

    \includegraphics[scale=\scl]{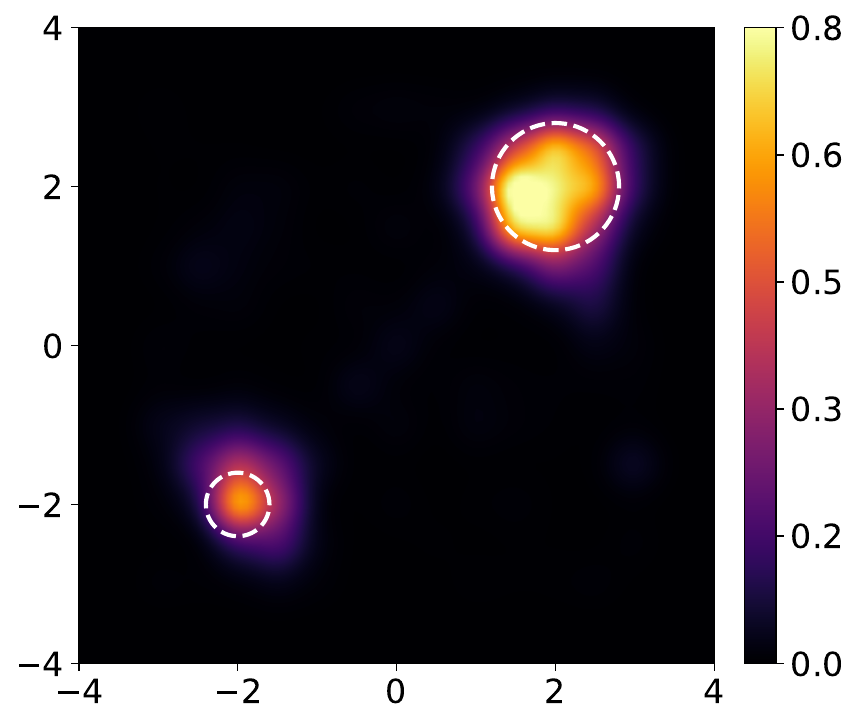}&
 \includegraphics[scale=\scl]{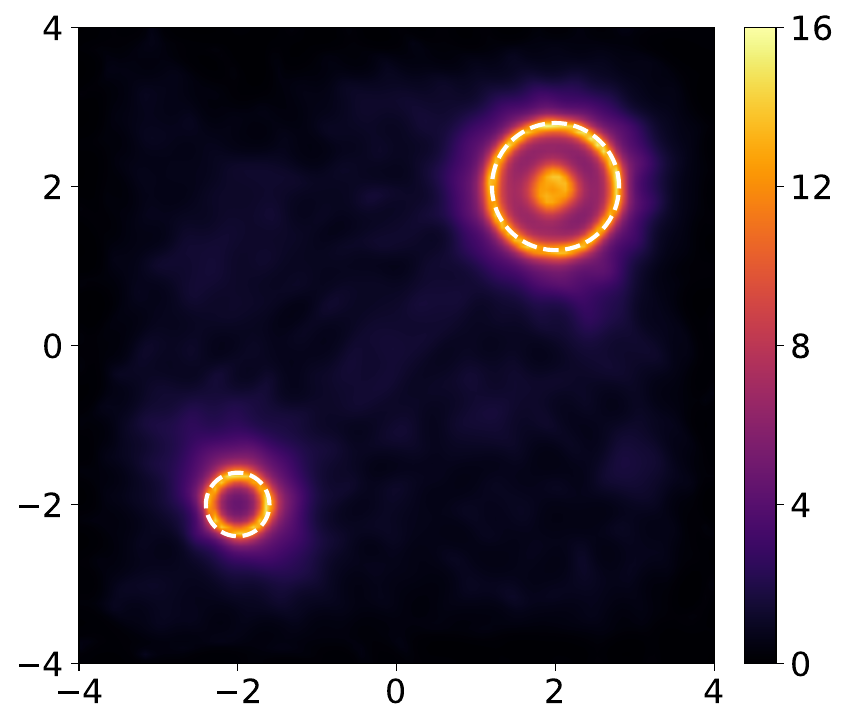}&
 \includegraphics[scale=\scl]{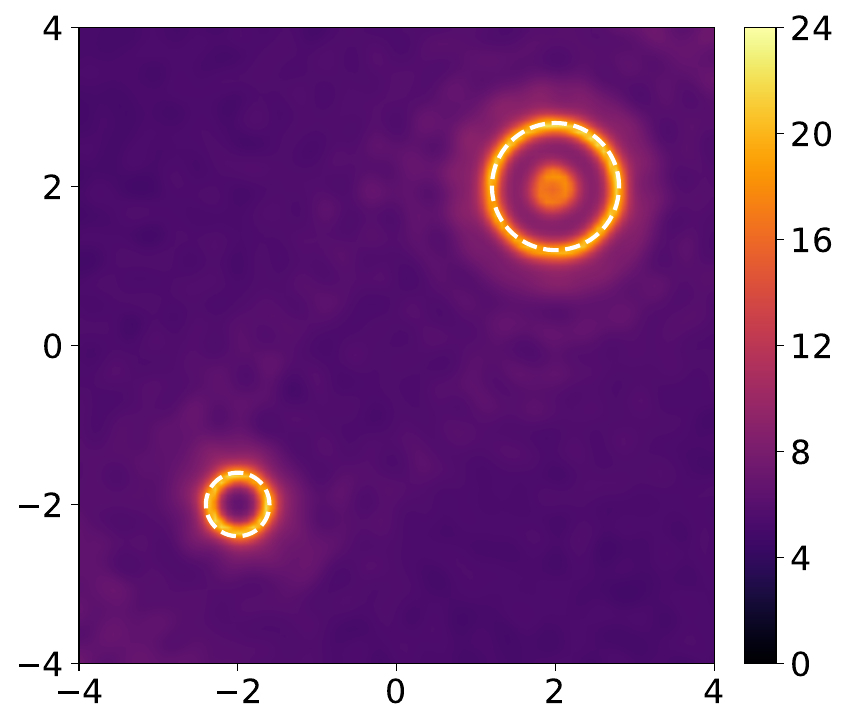} \\

    \includegraphics[scale=\scl]{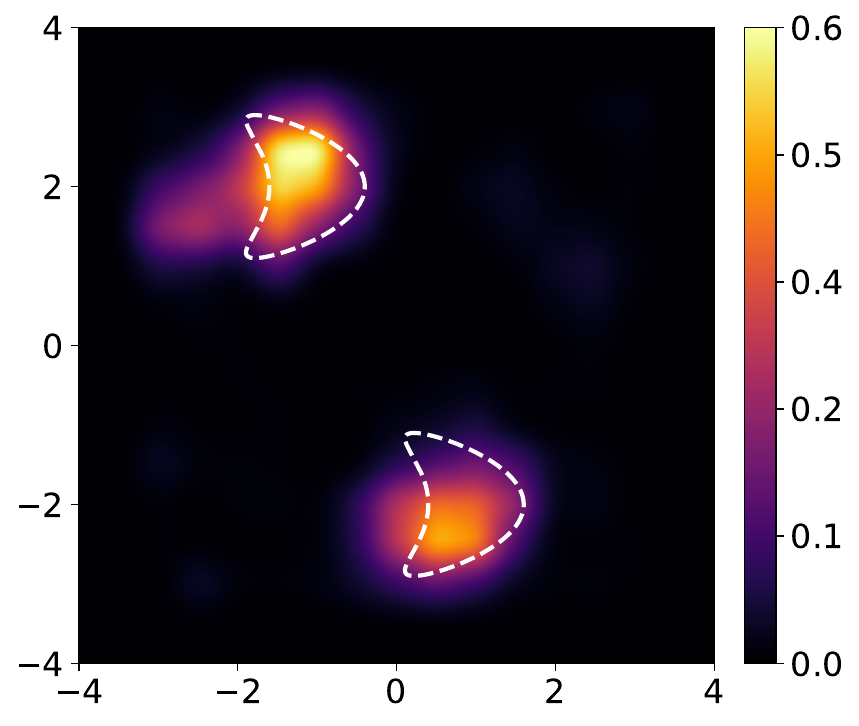}&
 \includegraphics[scale=\scl]{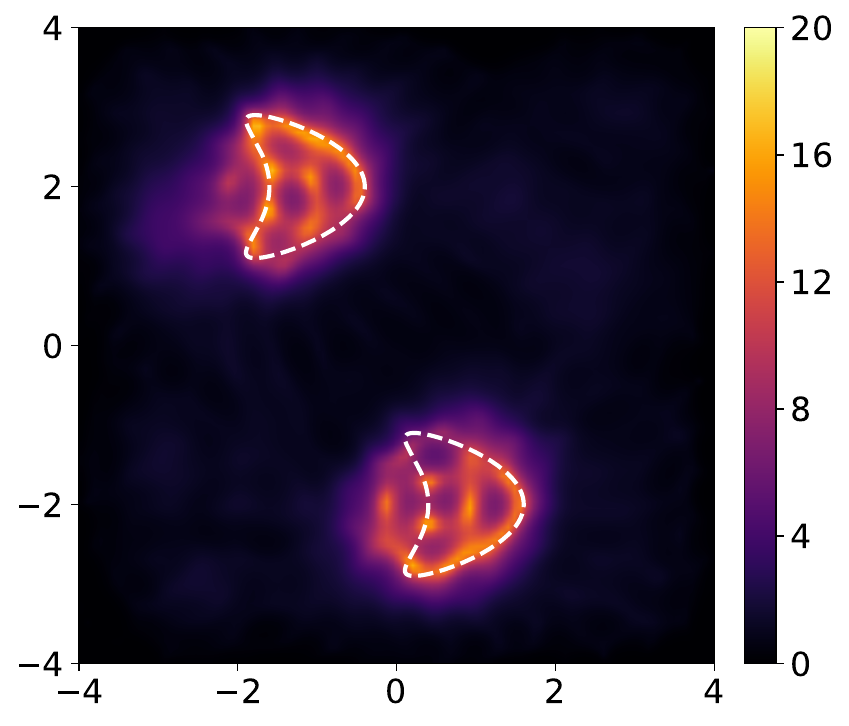}&
 \includegraphics[scale=\scl]{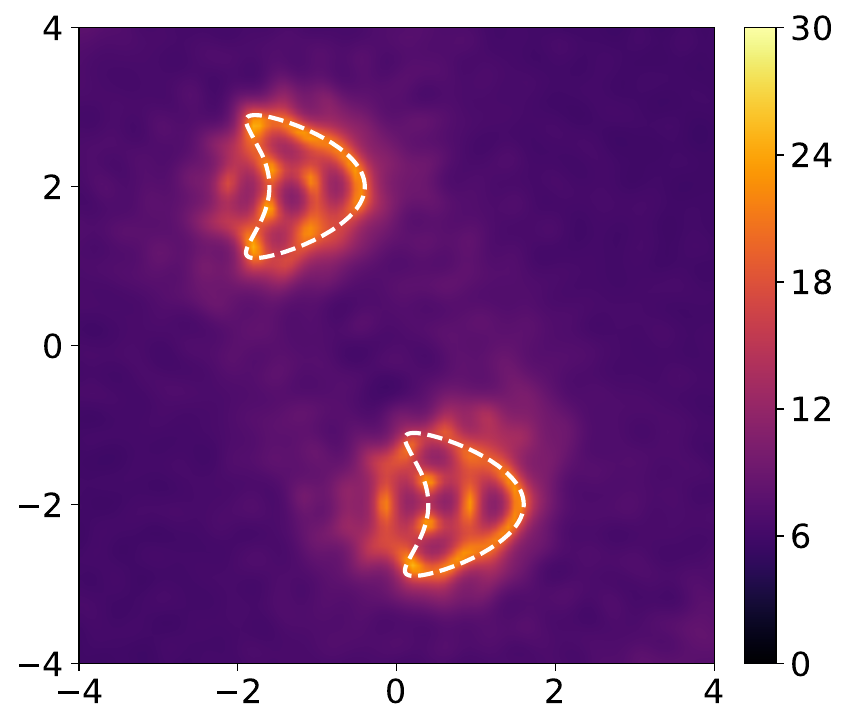} \\

\end{array}  $$
\caption{Reconstructions for different obstacle configurations are shown. The first row corresponds to two circles of radius $0.6$, the second row to two circles with radii $0.5$ and $1$, and the third row to two kites. The noise level is fixed to $\eta = 0.1$. The DeepONet accurately recovers the overall positions and sizes of the obstacles, although their geometry is not accurately resolved. The associated LSM indicator significantly improves the reconstruction, yielding results comparable to those obtained with the Morozov LSM indicator.
}
\label{fig:indicators_double}
\end{figure}

\subsection{Computational time comparison} 

In this section, we compare the computational cost of the standard LSM with that of our neural operator approach, assuming the neural networks have already been trained. 
For a given set of sampling points, we measure the time required to generate the regularization parameters and to compute the indicator function at each point. 
Recall that computing the regularization coefficients with Morozov's discrepancy principle requires solving a nonlinear equation at each sampling point, whereas the neural network approach only involves a forward pass through the network.
In our experiments, we consider $50 \times 50$ far-field matrices. 
The sampling points are chosen on a uniform grid over $\Omega$, and we vary the grid size. 
Tests were conducted on a 2024 MacBook Pro equipped with an Apple M4 Max chip and 36\,GB RAM. 
The results are reported in \cref{tab:times}.

\begin{table}[t!]
\centering
\caption{Computation times for Morozov's discrepancy principle and the DeepONet-based approach, together with the resulting speedup, for different grid sizes. 
The speedup stabilizes around a factor of six as the number of sampling points increases.}
\label{tab:times}
\begin{tabular}{c | c c c}
\toprule
Grid size & Morozov (time) [s] & DeepONet (time) [s] & Speedup \\
\midrule
$10\times 10$  & $2.5\times10^{-2}$ & $1.5\times10^{-2}$ & $1.6$ \\ 
$20\times 20$  & $4.0\times10^{-2}$ & $1.6\times10^{-2}$ & $2.5$ \\ 
$50\times 50$ & $2.5\times10^{-1}$ & $4.0\times10^{-2}$ & $6.4$ \\ 
$100\times 100$ & $6.5\times10^{-1}$ & $9.8\times10^{-2}$ & $6.7$ \\
$200\times 200$ & $2.4\times10^{0}$  & $3.8\times10^{-1}$ & $6.2$ \\
$500\times 500$ & $1.4\times10^{1}$  & $2.4\times10^{0}$  & $6.0$ \\
\bottomrule
\end{tabular}
\end{table}

We observe that, for sufficiently large grid sizes, the speedup stabilizes around a factor of six. 
In other words, the LSM can be performed roughly six times faster when using the neural operator approach. 
However, the networks must first be trained, which entails a fixed computational cost. 
For reference, training the DeepONet required approximately 60\,s on the same machine. 
Consequently, the neural operator method becomes advantageous when applied to a sufficiently large number of test configurations or to sufficiently fine sampling grids. 
The benefit is expected to be even greater in three dimensions, where the number of sampling points increases substantially.


\section{Conclusion}
\label{conclusion}

We proposed a neural operator approach to generate an initial indicator function, which can subsequently be used as a regularization function within LSM after appropriate noise-dependent rescaling. To this end, we introduced a specialized RBF-DeepONet architecture that combines the DeepONet framework with RBF theory, and we conducted an NTK-based analysis to identify parameter choices best suited for achieving optimal performance. In addition, we described a neural network-based process to estimate the noise level from the singular values of the far-field matrix.

The noise prediction process described in \cref{section_noise} seems to be reliable and robust, as it was successfully tested on obstacles of different shapes and for different matrix sizes. Some further theoretical work on singular values of random matrices could be carried out in order to reinforce the justification of our approach.

The neural network indicator performs particularly well for single-obstacle configurations, which fall within the training distribution. In this setting, it accurately recovers the location and overall geometry of the scatterers. When applied to configurations involving multiple obstacles, the indicator becomes less precise, but nevertheless, it remains capable of identifying the correct number of scatterers and provides reasonable estimates of their positions and sizes.
Importantly, when the network-based indicator is used within the LSM framework, the reconstruction quality is improved. The LSM effectively compensates for the inaccuracies of the neural indicator, yielding reconstructions that remain comparable to those obtained using Morozov's discrepancy principle. This highlights the complementary roles of the neural operator and the LSM: while the network provides a fast, data-driven indicator, the LSM restores robustness and consistency.

Once trained, the network-based indicator can be evaluated in essentially constant time for any discretization of the sampling domain, resulting in notable computational savings, particularly for fine sampling grids. This allows a quick, low-cost computation of a qualitative initial indicator. For a more accurate reconstruction, the LSM can then be applied using this initial indicator (appropriately scaled by the predicted noise level) as a regularization function. This approach remains robust while still being significantly faster than determining regularization parameters using Morozov's discrepancy principle.

We believe that our approach shows some potential and that it could be improved or modified to suit particular applications. One possible future work could consist of testing similar approaches on limited aperture setups. Research in this direction could be particularly valuable, especially knowing that the standard LSM struggles in such configurations. The neural operator framework is large enough to experiment with. From changing the depths, widths, the activation functions, or even the architecture of the network, many modifications can be attempted to obtain the best performing model for a certain task. For this purpose, we provide a Python toolbox that can be easily used for experimentation.

\section*{Acknowledgments}
We gratefully acknowledge Hugo Negrel, who first investigated the use of neural networks for inverse scattering in his master's thesis. 
His results motivated the continuation of this research.

\appendix

\section{Asymptotics for Bessel and Hankel functions}
We provide several useful asymptotic formulas for Bessel and Hankel functions. We denote by $J_p$ the $p$-order Bessel function of the first kind and by $H_p$ the $p$-order Hankel function of the first kind.

\subsection{Asymptotics for large arguments}
As $x \to \infty$ with $p \in \mathbb{N}$,
\begin{align}
    H_p(x)=\sqrt{\frac{2}{\pi x}} e^{i(x-p\pi/2-\pi/4)}\left( 1+\mathcal{O} \left( \frac{1}{x} \right) \right).
    \label{asymp_large_argument}
\end{align}
See \cite[eq. (9.2.3)]{abramowitz1964}.
\subsection{Asymptotics for large order}
As $p \to \infty$ with $x \geq 0$,
\begin{align}
    J_p(x)\sim \frac{1}{p!}\left( \frac{x}{2}\right)^p,
    \label{asymp_J_large_order}
\end{align}
and
\begin{align}
  H_p(x) \sim \frac{(p-1)!}{i\pi}  \left(\frac{2}{x} \right)^p .
  \label{asymp_H_large_order}
\end{align}
See \cite[eqs. (9.1.7)-(9.1.9)]{abramowitz1964}.
\section{Analytical formulas for disks}
\label{appendix_a}

This section relies heavily on \cite[Sec.~1.4]{napal2019}, from which we extract several results. The objective is to provide explicit formulas for the far field $u_{\infty}$ when the defect is circular.

Let the defect $D$ be a sound-soft circular obstacle with radius $R>0$ and centered at the origin.
Let us consider a plane incident wave with direction $\hat{d} =(\cos(\mu), \sin(\mu))$:

$$u^i(x):=e^{ik \hat{d}\cdot x}.$$ 
The Jacobi-Anger formula with polar coordinates $(r,\theta)=(|x|,\arg(x))$ yields:
\begin{align*}
   u^i(r,\theta)=  \sum_{n \in \mathbb{Z}} i^n J_{n}(k r) e^{in (\theta-\mu)}.
\end{align*}
 As the scattered field $u^s(r, \theta)$ is a radiating solution to the Helmholtz equation outside the circle, it admits the following expansion: 
\begin{align*}
    u^s(r, \theta)=  \sum_{n \in \mathbb{Z}} a_n H_{n}(kr) e^{i n \theta } \text { for }r>R ,
\end{align*}
where $a_n $ are complex numbers. The condition $u^s=-u^i$ on $\partial D$ yields 
$$a_n=-i^n e^{-in\mu}\frac{J_n(kR)}{H_n(kR)} .$$
 The asymptotic behavior of the Hankel functions \cref{asymp_large_argument} provides the expression for the far-field pattern:
\begin{align*}
u_{\infty}(\theta,\mu)=\sqrt{\frac{2}{k \pi }} e^{-\frac{i \pi}{4}} \sum_{n \in \mathbb{Z}} a_n (-i)^n e^{in\theta} = -\sqrt{\frac{2}{k \pi }} e^{-\frac{i \pi}{4}} \sum_{n \in \mathbb{Z}} \frac{J_n(kR)}{H_n(kR)}e^{in(\theta-\mu)}.
\end{align*}

In the more general case with obstacle centered at $\bs{c}$, this relation becomes 
\begin{align}
u_{\infty}(\theta,\mu)
=
-\sqrt{\frac{2}{k \pi }} e^{-\frac{i \pi}{4}}
\,e^{ik(\bs{\hat d}-\bs{\hat x})\cdot \bs{c}}
\sum_{n \in \mathbb{Z}} \frac{J_n(kR)}{H_n(kR)}\,e^{in(\theta-\mu)} ,
\label{ff_dirichlet_translated}
\end{align}
where $\theta=\arg(\hat x)$ and $\mu=\arg(\hat d)$.


\section{Far-field operator}
We introduce the far-field operator $\mathcal{F}$, which is key in the theoretical study of the LSM (\cite{colton_kress}, Sec. 3.4): 
\begin{align}
    \begin{array}{c}
        \mathcal{F}:  L^2(\mathbb{S}^1) \rightarrow L^2(\mathbb{S}^1)  \\
        \displaystyle
        g \mapsto \int_{\mathbb{S}^1} u_{\infty}(\cdot,\hat{d})\, g(\hat{d}) \, ds(\hat{d}).
    \end{array}
    \label{ff_operator}
\end{align}
Note that this operator is compact since it is an integral operator with a smooth kernel. In \cite{napal2019}, it has been shown that in the case of inhomogeneous circular obstacles, $\mathcal{F}$ is diagonalizable on the canonical basis of $L^2(\mathbb{S}^1)$, and the eigenvalues $\lambda_p$ have the following expressions and asymptotics:
\begin{align}
 \lambda_{p}
 =
 -\sqrt{\frac{8 \pi}{k}} e^{-\frac{i \pi}{4}} \frac{J_p(kR)}{H_p(kR)}
 \underset{p \rightarrow+\infty}{\sim}
 -\sqrt{\frac{8 \pi^3}{k}} e^{\frac{i \pi}{4}}
 \frac{1}{p!(p-1)!}
 \left( \frac{kR}{2} \right)^{2p}. 
 \label{asympotics_dirichlet}
\end{align}
The expression for this asymptotic behavior follows from relations \cref{asymp_J_large_order} and \cref{asymp_H_large_order}.

We provide a useful result regarding the far-field pattern. Considering an obstacle $D$ and its associated far-field pattern $u_{\infty}$, we introduce $u_{\infty,\tau}$ as the far-field pattern of the same obstacle translated by $\tau$, where $\tau \in \mathbb{R}^2$. Then, the following relation holds \cite{colton_kress}:
\begin{align*}
    u_{\infty,\tau}(\hat{x},\hat{d})
    =
    e^{ik \tau \cdot (\hat{d}-\hat{x})}\,
    u_{\infty}(\hat{x},\hat{d}) .
\end{align*}
In particular, this relation implies
\begin{align}
    \mathcal{F}_{\tau}^* \mathcal{F}_{\tau}
    =
    \mathcal{F}^* \mathcal{F},
    \label{invariance_relation}
\end{align}
where $\mathcal{F}$ and $\mathcal{F}_{\tau}$ respectively denote the far-field operator associated with the obstacle $D$ and the far-field operator associated with the translated obstacle. Furthermore, \cref{invariance_relation} implies that $\mathcal{F}$ and $\mathcal{F}_{\tau}$ have the same singular values.

\section{Neural Tangent Kernel Analysis}
We extend standard NTK analysis \cite{jacot2018} in the case of DeepONet with fixed trunk.
\label{appendix:ntk}

\subsection{Neural Tangent Kernel for standard neural networks}
\label{appendix:ntk_classic}
Let
\[
f_\theta : \mathbb{R}^d \to \mathbb{R}^{d'},
\qquad
f_\theta(x)
=
\bigl(f_{\theta,1}(x), \dots, f_{\theta,d'}(x)\bigr)^\top,
\]
and let $\{(x_i, y_i)\}_{i=1}^N$ be a set of training samples with
$y_i \in \mathbb{R}^{d'}$.
The empirical squared loss is given by
\[
\mathcal{L}(\theta)
=
\frac{1}{2}
\sum_{i=1}^N
\| f_\theta(x_i) - y_i \|_2^2
=
\frac{1}{2}
\| y_\theta - y \|_2^2,
\]
where
\[
y_\theta
=
\begin{pmatrix}
f_\theta(x_1)\\
\vdots\\
f_\theta(x_N)
\end{pmatrix}
\in \mathbb{R}^{N d'},
\qquad
y
=
\begin{pmatrix}
y_1\\
\vdots\\
y_N
\end{pmatrix}
\in \mathbb{R}^{N d'}.
\]
Defining the training error $e_\theta = y_\theta - y$ and considering
gradient flow $\theta=\theta(t)$, the evolution of the error satisfies
\[
\frac{d e}{dt}
=
- K(t)\, e,
\]
where the neural tangent kernel $K(t) \in \mathbb{R}^{(N d')\times(N d')}$ is a
block matrix
\[
K(t)
=
\begin{pmatrix}
K_{1,1} & \cdots & K_{1,N}\\
\vdots & \ddots & \vdots\\
K_{N,1} & \cdots & K_{N,N}
\end{pmatrix},
\]
where each block $K_{i,j} \in \mathbb{R}^{d'\times d'}$ is defined by
\[
K_{ij}
=
\nabla_\theta f_\theta(x_i)\,
\nabla_\theta f_\theta(x_j)^\top,
\]
where $\nabla_\theta f_\theta(x)$ is the Jacobian of $f_\theta$ with respect to parameters $\theta$.
In the infinite-width limit, and under appropriate scaling of the parameters,
the NTK converges to a deterministic kernel that remains constant throughout
training. Thus, the training dynamics reduce to a linear system governed
by the spectrum of the kernel matrix $K$ (see \cite{jacot2018}).

\subsection{Neural Tangent Kernel for RBF-DeepONet}
\label{appendix:ntk_DeepONet}
We now specialize the NTK analysis to the DeepONet architecture considered in this work.
The DeepONet is defined as
\[
I_\theta[F](z)
=
u_\theta(F)^\top \Phi_{\epsilon}(z),
\]
where \(u_\theta(F) \in \mathbb{R}^p\) is the output of the branch network and
\[
\Phi_{\epsilon}(z)
=
\bigl(
\phi_{\epsilon}(|z-z_1|),
\dots,
\phi_{\epsilon}(|z-z_p|)
\bigr)^\top
\in \mathbb{R}^p
\]
is a fixed trunk feature map built from radial functions.
Let \(\{F_i\}_{i=1}^{N}\) denote the training inputs and
\(\{I_i^{\mathrm{ref}}\}_{i=1}^{N}\) the corresponding target indicator functions.
Evaluating the output functions at the RBF centers \(\{z_j\}_{j=1}^p\), the (scaled) loss function is defined as
\[
\mathcal{L}(\theta)
=
\frac{1}{2}
\sum_{i=1}^{N}
\sum_{j=1}^{p}
\bigl|
I_\theta[F_i](z_j)
-
I_i^{\mathrm{ref}}(z_j)
\bigr|^2.
\]
We collect all network outputs and targets into the vectors
\[
y_{\theta}
=
\begin{pmatrix}
I_\theta[F_1](z_1)\\
\vdots\\
I_\theta[F_1](z_p)\\
\vdots\\
I_\theta[F_{N}](z_1)\\
\vdots\\
I_\theta[F_{N}](z_p)
\end{pmatrix}
\in \mathbb{R}^{Np},
\qquad
y
=
\begin{pmatrix}
I_1^{\mathrm{ref}}(z_1)\\
\vdots\\
I_1^{\mathrm{ref}}(z_p)\\
\vdots\\
I_{N}^{\mathrm{ref}}(z_1)\\
\vdots\\
I_{N}^{\mathrm{ref}}(z_p)
\end{pmatrix}
\in \mathbb{R}^{Np}.
\]
With this notation, the empirical squared loss can be written compactly as
\[
\mathcal{L}(\theta)
=
\frac{1}{2}\|y_{\theta} - y\|_2^2.
\]
Letting $e = y_{\theta} - y$, the gradient flow dynamics take the form
\[
\frac{d e}{dt} = - K(t)\, e,
\]
where $K(t) \in \mathbb{R}^{(Np)\times(Np)}$ is the DeepONet neural tangent kernel.
Since the trunk network is fixed, differentiation with respect to $\theta$ acts only on the branch network, yielding
\[
\nabla_\theta I_\theta[F](z)
=
\Phi_{\epsilon}(z)^\top
\nabla_\theta u_\theta(F).
\]
The NTK entries then satisfy
\[
K_{\alpha,\alpha'}
=
\Phi_{\epsilon}(z_j)^\top
G_{i,i'}
\Phi_{\epsilon}(z_{j'}),
\qquad
G_{i,i'}
:=
\nabla_\theta u_\theta(F_i)\,
\nabla_\theta u_\theta(F_{i'})^\top
\in \mathbb{R}^{p\times p}.
\]
Define the trunk matrix $P_\epsilon \in \mathbb{R}^{p\times p}$ by
\[
(P_\epsilon)_{ij}
=
\phi_\epsilon\!\left(|z_j-z_i|\right),
\qquad i,j=1,\dots,p,
\]
and define
\[
G :=
\begin{bmatrix}
G_{1,1} & \cdots & G_{1,N} \\
\vdots & \ddots & \vdots \\
G_{N,1} & \cdots & G_{N,N}
\end{bmatrix}
\in \mathbb{R}^{(Np)\times(Np)},
\]
which is the NTK matrix kernel of the branch network $u_\theta$.
One can then write
\[
K
=
\mathbb{P}_{\epsilon}^{\top}
G\,
\mathbb{P}_{\epsilon},
\]
where $\mathbb{P}_{\epsilon}= I_{N}\otimes P_\epsilon \in \mathbb{R}^{(Np)\times(Np)}$.
In the infinite-width limit of the branch network, the neural tangent kernel $K$ remains approximately constant during training, so that the dynamics are again governed by the spectrum of the kernel matrix.

\paragraph{Eigenvalue bound} We first state a general result.

\begin{theorem}[Eigenvalue bound]
\label{thm:eig_bound}
Let $A \in \mathbb{R}^{n\times n}$ be symmetric positive definite and let
\[
B = Q^\top A Q \in \mathbb{R}^{m\times m},
\qquad Q \in \mathbb{R}^{n\times m}.
\]
Then
\[
\lambda_{\min}(A)\,\sigma_{\min}(Q)^2
\;\le\;
\lambda_{\min}(B)
\;\le\;
\lambda_{\max}(B)
\;\le\;
\lambda_{\max}(A)\,\sigma_{\max}(Q)^2.
\]
\end{theorem}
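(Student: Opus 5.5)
The approach is to characterize the extreme eigenvalues of the symmetric matrix $B$ by Rayleigh quotients and to bound those quotients in two stages: first using $A$, then using $Q$. First note that $B = Q^\top A Q$ is symmetric, since $A$ is. By the Courant--Fischer (Rayleigh--Ritz) characterization,
\[
\lambda_{\min}(B) = \min_{x \in \mathbb{R}^m,\ \|x\|_2 = 1} x^\top B x,
\qquad
\lambda_{\max}(B) = \max_{x \in \mathbb{R}^m,\ \|x\|_2 = 1} x^\top B x .
\]
The middle inequality $\lambda_{\min}(B) \le \lambda_{\max}(B)$ is then immediate, so only the two outer inequalities need an argument.

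Fix a unit vector $x \in \mathbb{R}^m$ and set $y = Qx \in \mathbb{R}^n$, so that $x^\top B x = y^\top A y$. Applying the Rayleigh quotient bounds to $A$ gives
\[
\lambda_{\min}(A)\,\|y\|_2^2 \le y^\top A y \le \lambda_{\max}(A)\,\|y\|_2^2 .
\]
This holds trivially when $y = 0$. Next, $\|y\|_2^2 = x^\top Q^\top Q x$, and a second application of Rayleigh--Ritz, this time to the symmetric positive semidefinite matrix $Q^\top Q \in \mathbb{R}^{m\times m}$, gives
\[
\sigma_{\min}(Q)^2 = \lambda_{\min}(Q^\top Q) \le \|y\|_2^2 \le \lambda_{\max}(Q^\top Q) = \sigma_{\max}(Q)^2 .
\]
Since $\lambda_{\min}(A) > 0$ by positive definiteness, the lower bounds may be multiplied together, and likewise the upper bounds, using $\lambda_{\max}(A) \ge 0$. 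This yields
\[
\lambda_{\min}(A)\,\sigma_{\min}(Q)^2 \le x^\top B x \le \lambda_{\max}(A)\,\sigma_{\max}(Q)^2
\]
for every unit $x$. Taking the minimum and the maximum over $x$ gives the claim.

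The only delicate point is the convention for $\sigma_{\min}(Q)$ when $Q$ is rectangular. I will define it as $\lambda_{\min}(Q^\top Q)^{1/2}$, that is, the smallest of the $m$ singular values counted with zeros. With this convention the lower bound is automatically trivial (equal to $0$) when $m > n$, which is consistent because $B$ is then singular. I will also remark on positivity. The argument only uses $\lambda_{\min}(A) \ge 0$ in the lower bound, so the result remains valid for $A$ positive semidefinite. This matters for the application to \cref{th_NTK}: there one takes $A = G$, which is a Gram matrix of Jacobians and hence only guaranteed positive semidefinite, and $Q = \mathbb{P}_\epsilon = I_N \otimes P_\epsilon$. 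The singular values of this Kronecker product are exactly those of $P_\epsilon$, each repeated $N$ times, so $\sigma_{\min}(\mathbb{P}_\epsilon) = \sigma_{\min}(P_\epsilon)$ and $\sigma_{\max}(\mathbb{P}_\epsilon) = \sigma_{\max}(P_\epsilon)$, and the bound of \cref{th_NTK} follows directly.
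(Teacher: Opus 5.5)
Your proof is correct and is essentially the paper's argument: both write $x^\top B x = (Qx)^\top A (Qx)$, bound this by the extreme eigenvalues of $A$ times $\|Qx\|_2^2$, bound $\|Qx\|_2^2$ by the extreme singular values of $Q$, and then take the extremum over unit $x$. Your version is slightly more careful than the paper's in three ways: you avoid dividing by $\|Qx\|_2$, which can vanish; you make explicit the convention $\sigma_{\min}(Q)^2 = \lambda_{\min}(Q^\top Q)$, which is needed when $m > n$; and you note that positive semidefiniteness of $A$ suffices, which is all the Gram matrix $G$ guarantees in the application to \cref{th_NTK}.
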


\begin{proof}
Since $B$ is symmetric,
\[
\lambda_{\max}(B)
=
\sup_{x\neq 0}
\frac{x^\top Q^\top A Q x}{\|x\|^2}
=
\sup_{x\neq 0}
\frac{(Qx)^\top A (Qx)}{\|x\|^2}.
\]
For any $x$, set $y=Qx$. Using the Rayleigh quotient bound for $A$ and the
singular value bounds for $Q$,
\[
\frac{(Qx)^\top A (Qx)}{\|x\|^2}
=
\frac{y^\top A y}{\|y\|^2}
\frac{\|Qx\|^2}{\|x\|^2}
\le
\lambda_{\max}(A)\,\sigma_{\max}(Q)^2.
\]
Taking the supremum gives
\[
\lambda_{\max}(B)
\le
\lambda_{\max}(A)\,\sigma_{\max}(Q)^2.
\]
The lower bound follows similarly from
\[
\lambda_{\min}(B)
=
\inf_{x\neq 0}
\frac{(Qx)^\top A (Qx)}{\|x\|^2},
\]
using
\(
\frac{y^\top A y}{\|y\|^2} \ge \lambda_{\min}(A)
\)
and
\(
\frac{\|Qx\|^2}{\|x\|^2} \ge \sigma_{\min}(Q)^2.
\)
\end{proof}

Applying Theorem~\ref{thm:eig_bound} to the factorization
\[
K = \mathbb{P}_{\epsilon}^{\top} G\, \mathbb{P}_{\epsilon},
\]
we obtain
\[
\lambda_{\min}(G)\,\sigma_{\min}(P_{\epsilon})^{2}
\;\le\;
\lambda_{\min}(K)
\;\le\;
\lambda_{\max}(K)
\;\le\;
\lambda_{\max}(G)\,\sigma_{\max}(P_{\epsilon})^{2},
\]
since $\mathbb{P}_{\epsilon}$ and $P_{\epsilon}$ share the same singular values.

\bibliographystyle{siam}
\bibliography{references.bib}

\end{document}